\date{}
\newlength{\defbaselineskip}
\newcommand{\setlinespacing}[1]%
           {\setlength{\baselineskip}{#1 \defbaselineskip}}
\newcommand{\N}{{\mathbb{N}}}
\newcommand{\actaqed}{\hfill $\actabox$}
{\medskip\noindent \textit{Proof of #1. }}%
{\actaqed \medskip}
\def\cC{{\mathcal C}}
\def \Tr{\mathcal T}
\def \cR{\mathcal R}
\def \cM{\mathcal M}
\def\R{{\mathbb R}}
\def\Z{\mathbb Z}
\def \T{\mathbb T}
\def \<{\langle}
\def\>{\rangle}
\def \e{\varepsilon}
\def\ga{\gamma}
\def\bx{\mathbf x}
\def\bk{\mathbf k}
\def\bs{\mathbf s}
\def\bW{\mathbf W}
\newtheorem{Theorem}{Theorem}[section]
\newtheorem{Lemma}{Lemma}[section]
\newtheorem{Proposition}{Proposition}[section]
\numberwithin{equation}{section}
\newcommand{\be}{\begin{equation}}
\newcommand{\ee}{\end{equation}}
\begin{document}

\title{Sampling discretization of integral norms of the hyperbolic cross polynomials}
\author{V. Temlyakov}

\newcommand{\Addresses}{{% additional braces for segregating \footnotesize
  \bigskip
  \footnotesize

%  F.~Dai, \textsc{ Department of Mathematical and Statistical Sciences\\
%University of Alberta\\ Edmonton, Alberta T6G 2G1, Canada\\
%E-mail:} \texttt{fdai@ualberta.ca }

%  \medskip
% A.~Prymak, \textsc{ Department of Mathematics\\
%University of Manitoba\\ Winnipeg, MB, R3T 2N2, Canada
 % \\
%E-mail:} \texttt{Andriy.Prymak@umanitoba.ca }

%\medskip
%A.~Shadrin, \textsc{Department of Mathematics and Theoretical Physics\\
%University of Cambridge\\Wilberforce Road, Cambridge CB3 0WA, UK
%\\
%E-mail:} \texttt{a.shadrin@damtp.cam.ac.uk}

%    \medskip
  V.N. Temlyakov, \textsc{University of South Carolina,\\ Steklov Institute of Mathematics,\\ and Lomonosov Moscow State University
  \\
E-mail:} \texttt{temlyak@math.sc.edu}

%  \medskip

 % S.~Tikhonov, \textsc{Centre de Recerca Matem\`{a}tica\\
%Campus de Bellaterra, Edifici C
%08193 Bellaterra (Barcelona), Spain;\\
%ICREA, Pg. Llu\'{i}s Companys 23, 08010 Barcelona, Spain,\\
% and Universitat Aut\`{o}noma de Barcelona\\
%E-mail:} \texttt{stikhonov@crm.cat}

}}

\maketitle
\begin{abstract}
{The paper is devoted to discretization of integral norms of functions from
a given finite dimensional subspace. We use recent general results on sampling discretization to derive a new Marcinkiewicz type
   discretization theorem for  the multivariate trigonometric polynomials with frequencies from the hyperbolic crosses. It is shown that recently developed techniques allow us to improve the known results in this direction.
  }
\end{abstract}

\section{Introduction}
\label{I}

Let $\Omega$ be a compact subset of $\R^d$ with the probability measure $\mu$. By $L_q$, $1\le q< \infty$, norm we understand
$$
\|f\|_q:=\|f\|_{L_q(\Omega)} := \left(\int_\Omega |f|^qd\mu\right)^{1/q}.
$$
By discretization of the $L_q$ norm we understand a replacement of the measure $\mu$ by
a discrete measure $\mu_m$ with support on a set $\xi =\{\xi^j\}_{j=1}^m \subset \Omega$. This means that integration with respect to measure $\mu$ is replaced by an appropriate cubature formula. Thus, integration is replaced by evaluation of a function $f$ at a
finite set $\xi$ of points. This is why we call this way of discretization {\it sampling discretization}. Discretization is
a very important step in making a continuous problem computationally feasible. The reader can find a corresponding discussion in a recent survey \cite{DPTT}. The first results in sampling discretization were obtained by Marcinkiewicz and
by Marcinkiewicz-Zygmund (see \cite{Z}) for discretization of the $L_q$ norms of the univariate trigonometric polynomials in 1930s. We call discretization results of this kind the {\it Marcinkiewicz type theorems}. Recently, a substantial progress in sampling discretization has been made in \cite{VT158}, \cite{VT159}, \cite{DPTT}, \cite{VT168}, \cite{DPSTT1}, \cite{DPSTT2}, \cite{Kos}. To discretize  the integral norms successfully, a  new technique was introduced.  This technique takes different forms in different papers but the common feature of its forms is the following. The new sampling discretization technique is a combination of probabilistic technique, in particular chaining technique, with results on the entropy numbers in the uniform norm (or its variants). Fundamental results from \cite{BLM}, \cite{Tal}, \cite{MSS} were used. The reader can find 
results on chaining in \cite{KoTe}, \cite{Tbook} and on the generic chaining in \cite{Tal}.  We note that the idea of chaining technique goes back to the 1930s, when it was suggested by A.N. Kolmogorov. Later, these type of results have been developed in the study of the central limit theorem in probability theory (see, for instance, \cite{GZ}).
Also, the reader can find general results on metric entropy in \cite[Ch.15]{LGM},  \cite[Ch.3]{Tbook}, \cite[Ch.7]{VTbookMA}, \cite{Carl},
\cite{Schu} and in the recent papers \cite{VT156} and  \cite{HPV}.
Bounds for the entropy numbers of function classes are important by themselves and also have important connections to other fundamental problems (see, for instance, \cite[Ch.3]{Tbook} and \cite[Ch.6]{DTU}).

We now proceed to the detailed presentation.

{\bf Marcinkiewicz problem.} Let $\Omega$ be a compact subset of $\R^d$ with the probability measure $\mu$. We say that a linear subspace $X_N$ (index $N$ here, usually, stands for the dimension of $X_N$) of $L_q(\Omega)$, $1\le q < \infty$, admits the Marcinkiewicz type discretization theorem with parameters $m\in \N$ and $q$ and positive constants $C_1\le C_2$ if there exist a set 
$$
\Big\{\xi^j \in \Omega: j=1,\dots,m\Big\}
$$ 
 such that for any $f\in X_N$ we have
\be\label{I.1}
C_1\|f\|_q^q \le \frac{1}{m} \sum_{j=1}^m |f(\xi^j)|^q \le C_2\|f\|_q^q.
\ee
In the case $q=\infty$ we define $L_\infty$ as the space of continuous functions on $\Omega$  and ask for
\be\label{I.2}
C_1\|f\|_\infty \le \max_{1\le j\le m} |f(\xi^j)| \le  \|f\|_\infty.
\ee
We will also use the following brief way to express the above properties: the $\cM(m,q)$ (more precisely the $\cM(m,q,C_1,C_2)$) theorem holds for  a subspace $X_N$, written $X_N \in \cM(m,q)$ (more precisely $X_N \in \cM(m,q,C_1,C_2)$).

Our main interest in this paper is to discuss the Marcinkiewicz problem in the case, when 
$X_N$ is a subspace of the trigonometric polynomials with frequencies (harmonics) from
a hyperbolic cross. 
By $Q$ we denote a finite subset of $\Z^d$, and $|Q|$ stands for the number of elements in $Q$. Let
$$
\Tr(Q):= \left\{f: f=\sum_{\bk\in Q}c_\bk e^{i(\bk,\bx)},\  \  c_{\bk}\in\mathbb{C}\right\}.
$$
For $\bs\in\Z^d_+$
define
$$
\rho (\bs) := \{\bk \in \Z^d : [2^{s_j-1}] \le |k_j| < 2^{s_j}, \quad j=1,\dots,d\}
$$
where $[x]$ denotes the integer part of $x$. We define the step hyperbolic cross
$Q_n$ as follows
$$
Q_n := \cup_{\bs:\|\bs\|_1\le n} \rho(\bs)
$$
and the corresponding set of the hyperbolic cross polynomials as 
$$
\Tr(Q_n) := \{f: f=\sum_{\bk\in Q_n} c_\bk e^{i(\bk,\bx)}\}.
$$
In addition to the step hyperbolic cross $Q_n$ we also consider a more general step hyperbolic 
cross $Q_n^\ga$, where $\ga = (\ga_1,\dots,\ga_d)$ has the form $1=\ga_1=\dots=\ga_\nu< \ga_{\nu+1} \le \dots\le \ga_d$ with $\nu\in \N$, $\nu \le d$:
$$
Q_n^\ga := \cup_{\bs:(\ga,\bs)\le n} \rho(\bs).
$$
It is clear that in the case $\ga = {\mathbf 1} := (1,\dots,1)$ we have $Q_n^{\mathbf 1} = Q_n$.
In this paper we are primarily interested in the Marcinkiewicz type discretization theorems for the hyperbolic cross trigonometric polynomials from $\Tr(Q_n^\ga)$.

The most complete results on sampling discretization are obtained in the case $q=2$.  The problem is basically solved in the case of subspaces of trigonometric polynomials $\Tr(Q)$ with arbitrary $Q$.
In \cite{VT158} it was shown how to derive the following result from the
 recent paper by  S.~Nitzan, A.~Olevskii, and A.~Ulanovskii~\cite{NOU}, which in turn is based on the paper of A.~Marcus, D.A.~Spielman, and N.~Srivastava~\cite{MSS}.

\begin{Theorem}\label{IT1} There are three positive absolute constants $C_1$, $C_2$, and $C_3$ with the following properties: For any $d\in \N$ and any $Q\subset \Z^d$   there exists a set of  $m \le C_1|Q| $ points $\xi^j\in \T^d$, $j=1,\dots,m$ such that for any $f\in \Tr(Q)$
we have
$$
C_2\|f\|_2^2 \le \frac{1}{m}\sum_{j=1}^m |f(\xi^j)|^2 \le C_3\|f\|_2^2.
$$
\end{Theorem}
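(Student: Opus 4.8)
The plan is to recast the $q=2$ Marcinkiewicz inequality as a frame inequality in $\bbC^N$ with $N=|Q|$, and then to obtain the sampling set by a subset-selection argument built on the Marcus--Spielman--Srivastava theorem in the form packaged by Nitzan--Olevskii--Ulanovskii. First I would use orthonormality: for $\bk\in Q$ put $u_\bk(\bx):=e^{i(\bk,\bx)}$, so that $\{u_\bk\}_{\bk\in Q}$ is an orthonormal system in $L_2(\T^d)$ with the normalized Lebesgue measure and, by Parseval, $\|f\|_2^2=\sum_{\bk\in Q}|c_\bk|^2=\|c\|_{\ell_2}^2$ for $f=\sum_{\bk\in Q}c_\bk u_\bk$. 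For a finite point set $\{\xi^j\}$ and the vectors $v_j:=(u_\bk(\xi^j))_{\bk\in Q}\in\bbC^N$ one has $f(\xi^j)=\<c,\overline{v_j}\>$, hence $\frac1m\sum_{j=1}^m|f(\xi^j)|^2=\frac1m\sum_{j=1}^m|\<c,\overline{v_j}\>|^2$. Thus the two-sided bound in Theorem~\ref{IT1} is precisely the assertion that the normalized system $\{m^{-1/2}v_j\}_{j=1}^m$ is a frame for $\bbC^N$ with bounds $C_2,C_3$.

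Next I would manufacture an exact discretization on a fine grid to get a tight frame of small-norm vectors. The differences $\bk-\bk'$ with $\bk,\bk'\in Q$ lie in a fixed box, so for $N_0\in\N$ exceeding its side the characters $\{u_\bk\}_{\bk\in Q}$ are discretely orthonormal on the grid $\eta^i\in\{2\pi\bn/N_0:\bn\in\{0,\dots,N_0-1\}^d\}$, by orthogonality of roots of unity. With $M:=N_0^d$ grid points this gives the exact identity $\frac1M\sum_{i=1}^M|f(\eta^i)|^2=\|f\|_2^2$ for all $f\in\Tr(Q)$. Equivalently, the vectors $w_i:=M^{-1/2}\overline{v(\eta^i)}$ form a Parseval frame, $\sum_{i=1}^M w_iw_i^{*}=I_N$, and each has the uniformly small norm $\|w_i\|^2=N/M$.

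Finally, and this is the heart of the matter, I would select a subset. The Nitzan--Olevskii--Ulanovskii development of the MSS theorem allows one to partition a Parseval frame with uniformly small vector norms into $r$ blocks, each block $S$ having frame operator $\sum_{i\in S}w_iw_i^{*}$ two-sidedly comparable to $\tfrac1r I_N$ with absolute constants (the lower bound on a block following from the fact that the $r$ blocks sum to $I_N$, once $N/M$ is small enough). Choosing $r\asymp M/N$ produces a block with $|S|=:m\le C_1 N$. Setting $\{\xi^j\}:=\{\eta^i:i\in S\}$ and unwinding the normalizations gives $\frac1m\sum_{j=1}^m|f(\xi^j)|^2=\frac{M}{m}\<\big(\sum_{i\in S}w_iw_i^{*}\big)c,c\>$, and since $\frac{M}{m}\asymp r$ cancels the factor $\tfrac1r$ in the block's frame operator, the right-hand side is comparable to $\|c\|_{\ell_2}^2=\|f\|_2^2$ with absolute constants $C_2,C_3$. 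I expect the main obstacle to be exactly this simultaneous control of the cardinality $m\asymp N$ and of both frame bounds by absolute constants: it is the deep Kadison--Singer/MSS input, and once it is invoked in the Nitzan--Olevskii--Ulanovskii form, the grid construction and the rescaling above are routine bookkeeping.
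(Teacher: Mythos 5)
Your proposal is correct and takes essentially the same route the paper relies on: the paper proves Theorem~\ref{IT1} by citing \cite{VT158}, where it is derived exactly as you describe --- exact quadrature on a sufficiently fine grid giving a Parseval frame of uniformly small-norm vectors, then the Marcus--Spielman--Srivastava partition in the Nitzan--Olevskii--Ulanovskii form \cite{NOU}, \cite{MSS} applied to select a block of $\asymp|Q|$ points, and a rescaling of that block's frame operator. Your grid construction, the choice $r\asymp M/N$, and the final bookkeeping match the cited derivation, so nothing essential is missing.
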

In other words, there exist three positive absolute constants $C_1$, $C_2$, and $C_3$ such that for any $Q\subset \Z^d$ we have $\Tr(Q) \in \cM(m,2,C_2,C_3)$ provided $m\ge C_1|Q|$.
We now restrict ourselves to the case $q\in [1,\infty)$, $q\neq 2$ and $Q=Q_n^\ga$. In this paper we provide some bounds on $m$, which guarantee that $\Tr(Q_n^\ga) \in \cM(m,q)$.
The following Theorem \ref{IT2} is the main result of the paper.

\begin{Theorem}\label{IT2} For $q\in [1,\infty)$, and $\ga$, where $\ga = (\ga_1,\dots,\ga_d)$ has the form $1=\ga_1=\dots=\ga_\nu< \ga_{\nu+1} \le \dots\le \ga_d$ with $\nu\in \N$, $\nu \le d$, there are three positive constants
$C_i=C_i(q,\ga)$, $i=1,2,3$, such that we have 
$$
\Tr(Q_n^\ga) \in \cM(m,q,C_2,C_3)\quad \text{provided}\quad m\ge C_1|Q_n^\ga|n^{w(\nu,q) },
$$
 where
$$
w(\nu,1) = 3;\quad w(\nu,q) = 2, \quad q\in (1,2]; 
$$
$$
w(\nu,q) = (\nu-1)(q-2) + \min(q,3),\quad q> 2.
$$

\end{Theorem}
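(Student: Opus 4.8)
The plan is to reduce the statement, via the general conditional sampling-discretization theorems developed in the recent literature (\cite{VT159,DPSTT1,DPSTT2}), to two concrete analytic facts about the space $X_N:=\Tr(Q_n^\ga)$: a Nikolskii-type inequality and a sharp estimate of its entropy numbers in the uniform metric. Throughout I write $N:=|Q_n^\ga|$ and recall that the geometry of the step hyperbolic cross gives $N\asymp 2^n n^{\nu-1}$, so that the target bound $m\gtrsim N\, n^{w(\nu,q)}$ asks for only the factor $n^{w(\nu,q)}$ on top of the dimension count. I would sample the points $\xi^j$ at random (i.i.d.\ with respect to $\mu$) and control, uniformly over the $L_q$-unit ball $\mathcal B:=\{f\in X_N:\|f\|_q\le1\}$, the empirical process
$$
f\longmapsto \frac1m\sum_{j=1}^m|f(\xi^j)|^q-\|f\|_q^q ,
$$
showing that with the asserted $m$ its supremum is at most, say, $\tfrac12$; the two-sided inequality \eqref{I.1} then holds with $C_2=\tfrac12$, $C_3=\tfrac32$ on $\mathcal B$ and hence, by homogeneity, on all of $X_N$.

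The first ingredient is the Nikolskii inequality $\|f\|_\infty\le H\|f\|_q$ for $f\in X_N$. Decomposing $f=\sum_{(\ga,\bs)\le n}\de_{\bs}(f)$ into the dyadic blocks supported on $\rho(\bs)$, using the block-level estimate $\|\de_{\bs}(f)\|_\infty\lesssim 2^{\|\bs\|_1/q}\|\de_{\bs}(f)\|_q$ (the one-dimensional Nikolskii inequality applied coordinatewise, since $|\rho(\bs)|\asymp 2^{\|\bs\|_1}$), and summing over blocks — the dominant level $\|\bs\|_1\approx n$ carries $\asymp n^{\nu-1}$ blocks and supplies the $2^{n/q}$ factor — one obtains $H\asymp 2^{n/q}n^{b}$ with an explicit power $b=b(\nu,q)$. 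I stress that feeding this inequality into a Nikolskii-only discretization bound would only give $m$ of order $H^{q}N\asymp 2^{2n}n^{\nu-1}$, which is exponentially too large; the entropy refinement below is therefore essential, as it records that on most of the ball the polynomials are far smaller in sup-norm than the extremal $H$.

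The second and harder ingredient is the estimate of the entropy numbers $\e_k(\mathcal B, L_\infty)$ and $\e_k(\mathcal B, L_2)$, which I would take from the theory of entropy of hyperbolic-cross classes (\cite{Tbook,DTU,VT156}). What the chaining argument actually consumes is a Bernstein-type splitting of the increments of the process: writing $\psi(t)=|t|^q$ and using
$$
|\psi(f(x))-\psi(g(x))|\le q\,\big(\max(|f(x)|,|g(x)|)\big)^{q-1}\,|f(x)-g(x)|,
$$
the increments are governed in the ``variance'' regime by the $L_2(\mu)$-metric and in the ``boundedness'' regime by $H^{q-1}\|f-g\|_\infty$. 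The generic chaining bound (\cite{Tal}) then produces an $m^{-1/2}$ term driven by the $L_2(\mu)$-metric entropy of the family $\{\psi\circ f:f\in\mathcal B\}$ and an $m^{-1}$ term driven by its $L_\infty$-metric entropy, the latter weighted by $H^{q-1}$; balancing these two terms against the required accuracy and solving for $m$ yields $m\gtrsim N\,n^{w(\nu,q)}$.

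The main obstacle is exactly the bookkeeping of logarithmic powers so that the final exponent of $n$ is the sharp $w(\nu,q)$ rather than something larger. Three features must be tracked with care. First, the transition at $q=2$: for $1<q\le2$ the nonlinearity $\psi$ is, up to the endpoint, controlled by the variance term alone, giving the constant exponent $w=2$, while the extra unit at $q=1$ ($w=3$) comes from the degeneracy of this control at the endpoint and is recovered by an additional chaining/truncation step. Second, for $q>2$ the factor $H^{q-1}$ in the boundedness term is what makes the exponent grow, and the saturation $\min(q,3)$ reflects the switch in which of the two chaining terms dominates as $q$ increases past $3$. Third, and most delicate, is isolating the contribution of the $\nu$ coordinates with $\ga_j=1$: these directions behave like a genuine $\nu$-dimensional isotropic hyperbolic cross and are responsible for the anisotropy term $(\nu-1)(q-2)$, whereas the strictly-weighted directions contribute only lower-order logarithms. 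Getting these three effects to combine into precisely $w(\nu,1)=3$, $w(\nu,q)=2$ for $q\in(1,2]$, and $w(\nu,q)=(\nu-1)(q-2)+\min(q,3)$ for $q>2$ is where the real work lies.
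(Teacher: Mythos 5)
Your plan captures the right general philosophy (a Nikol'skii inequality plus entropy bounds in the uniform norm feeding a chaining/concentration argument), but it has genuine gaps at exactly the points that constitute the theorem. First, the entropy bounds you propose to import do not exist where you say they do. The classical hyperbolic-cross entropy results in \cite{Tbook}, \cite{DTU}, \cite{VT156} are designed for mixed-smoothness function classes and do not yield the specific form $\e_k(X_N^q,L_\infty)\le B(N/k)^{1/q}$, $1\le k\le N$, with the sharp constant $B$ that determines the final exponent of $n$ (the paper says this explicitly at the start of Section \ref{B}). These bounds have to be manufactured: for $q\in[1,2]$ via Theorem \ref{BT1} of \cite{DPSTT2}, whose hypothesis (ii) is verified by discretizing the full box $\Pi_n$ with $C(d)2^{nd}$ points, and for $q>2$ via Kosov's Lemma \ref{BL1}, which rests on a deep result of Talagrand (Lemma 16.5.4 of \cite{Tal}) and converts the Nikol'skii constant $M$ into the entropy constant $B\asymp(\log N)^{1/q}MN^{-1/q}$. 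Supplying these bounds is the actual content of the proof, and your outline delegates it to references that cannot provide it.

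Second, you never derive the exponents $w(\nu,q)$; your closing paragraph concedes that combining the three effects into $w(\nu,1)=3$, $w=2$ on $(1,2]$, and $(\nu-1)(q-2)+\min(q,3)$ for $q>2$ ``is where the real work lies,'' but that is precisely the statement to be proved. Two concrete dangers in what you do sketch: (a) the block-summation Nikol'skii argument as you describe it (the dominant level carries $\asymp n^{\nu-1}$ blocks) gives only $H\lesssim 2^{n/q}n^{\nu-1}$, whereas the theorem needs the sharp $M\asymp 2^{n/q}n^{(\nu-1)(1-1/q)}$ of \cite{Tem4}, \cite{Tmon}, obtained via H\"older across the blocks of each level together with a Littlewood--Paley embedding (or by duality with the hyperbolic Dirichlet kernel); with the non-sharp constant, your route produces $(\nu-1)(q-1)$ in place of $(\nu-1)(q-2)$. (b) The saturation $\min(q,3)$ does not arise from a dominance switch between two terms inside one chaining bound; in the paper it arises from choosing between two different conditional theorems --- Kosov's Theorem \ref{AT3}, whose log-cost $(\log N)^{\max(q,2)-1}$ wins for $2<q<3$, versus Theorem \ref{AT2} of \cite{DPSTT1}, whose cost $(\log_2(2BN))^2$ wins for $q\ge 3$ --- with the separate Theorem \ref{AT1} handling $q=1$. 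Re-deriving all of this inside a single from-scratch empirical-process argument is not impossible in principle, but nothing in your proposal does it; the paper's route is to verify the hypotheses of these known conditional theorems and invoke them, which is both shorter and where the claimed exponents actually come from.
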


We point out that in case $\nu=1$ the exponent of the extra factor in the bound for $m$  does not grow with $q$: $w(1,q)\le 3$. 
Theorem \ref{IT2} improves the corresponding result of E.S. Belinsky \cite{Bel}. In \cite{Bel} 
there is the following condition on the number of sampling points $m\ge C_1|Q_n^\ga|n^{\max((d-1)(q-2),0)+4}$. Theorem \ref{IT2} improves the bound from \cite{Bel}  for all $q\in [1,\infty)$. 
We note that in the case $q=2$ Theorem \ref{IT1} provides a stronger than Theorem \ref{IT2} result. In the case $q\in [1,3]$ Theorem \ref{IT2} follows from
known general results. A new bound proved in this paper corresponds to the case $q\in (3,\infty)$. We present this proof in Section \ref{A}. In Section \ref{B} we discuss entropy numbers of the $L_q$ unit balls of $\Tr(Q_n^\ga)$ in the uniform norm. Finally, in Section \ref{D} we discuss extension of Theorem \ref{IT2} to the case of arbitrary $Q\subset \Z^d$.

Theorem \ref{IT2} does not cover the case $q=\infty$. It is known that the sampling discretization results in case $q=\infty$ are fundamentally different from those in case $q\in [1,\infty)$. Theorem \ref{IT2} shows that for all $q\in [1,\infty)$ condition $m\ge C(q,d)|Q_n|n^{w(d,q)}$ is sufficient for $\Tr(Q_n) \in \cM(m,q)$. An extra factor $n^{w(d,q)}$ is a logarithmic 
in terms of $|Q_n|$ factor. A nontrivial surprising negative result was proved for $q=\infty$ (see \cite{KT3}, \cite{KT4}, \cite{KaTe03}, and, also, \cite{VTbookMA}, p.344, Theorem 7.5.17). The authors proved that the necessary condition for
$\Tr(Q_n)\in\cM(m,\infty)$ is $m\ge C|Q_n|^{1+c}$ with absolute constants $C,c>0$. We do not present new results for the case $q=\infty$ in this paper. The reader can find further results and 
discussions of the case $q=\infty$ in \cite{DPTT} and \cite{VT168}.

There are many open problems in sampling discretization (see \cite{DPTT}). We now formulate
one directly related to Theorems \ref{IT1} and \ref{IT2}.

{\bf Open problem 1.}  Is it true that for $q\in [1,\infty)$, $q\neq 2$, and $\ga$ there are three positive constants  
$C_i=C_i(q,\ga)$, $i=1,2,3$, such that we have 
$$
\Tr(Q_n^\ga) \in \cM(m,q,C_2,C_3)\quad \text{provided}\quad m\ge C_1|Q_n^\ga|.
$$

Throughout the paper letter $C$ denotes a positive constant, which may be different in different formulas. Notation $C(q,d)$ means that the constant $C$ may depend on parameters $q$ and $d$. 
Sometimes it will be convenient for us to use the following notation. For two sequences $\{a_k\}_{k=1}^\infty$ and $\{b_k\}_{k=1}^\infty$ we write $a_k \asymp b_k$ if there are two positive constants $C_1$ and $C_2$ independent of $k$ such that $C_1a_k \le b_k\le C_2 a_k$, $k=1,2,\dots$. 

\section{General results and proof of Theorem \ref{IT2}}
\label{A}

We begin with a simple remark on a connection between real and complex cases. Usually, 
general results are proved for real subspaces $X_N$. Suppose that a complex subspace 
has a form
$$
\cC_N = \{f = f_R + if_I,\quad f_R, f_I \in X_N\},
$$
where $X_N$ is a real subspace. 

\begin{Proposition}\label{AP1} Let $q\in [1,\infty)$. Suppose $X_N\in \cM(m,q,C_2,C_3)$. 
Then $\cC_N \in \cM(m,q,C_22^{-q-1},C_32^{q+1})$.

\end{Proposition}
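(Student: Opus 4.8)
The plan is to reduce the complex discretization inequality for $\cC_N$ to the real one for $X_N$ by writing $|f|^q = (f_R^2 + f_I^2)^{q/2}$ and comparing this quantity, both in the integral norm and in the discrete sum, with the separate contributions $\|f_R\|_q^q$ and $\|f_I\|_q^q$ (and with $\sum_j |f_R(\xi^j)|^q$, $\sum_j |f_I(\xi^j)|^q$). The point is that since $f_R, f_I \in X_N$, each of these real polynomials individually satisfies \eqref{I.1} with the given constants $C_2, C_3$, so the whole argument amounts to controlling $(a^2+b^2)^{q/2}$ in terms of $a^q + b^q$ for nonnegative reals $a,b$ with sharp enough constants to produce the stated factors $2^{-q-1}$ and $2^{q+1}$.

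The key elementary fact I would isolate is the two-sided comparison
$$
2^{-q/2}\bigl(a^q + b^q\bigr) \le \bigl(a^2+b^2\bigr)^{q/2} \le 2^{(q-2)/2}\bigl(a^q+b^q\bigr) \quad \text{for } a,b\ge 0,\ q\ge 1.
$$
The upper bound follows from the convexity (for $q\ge 2$) or a power-mean inequality argument relating $\ell_2$ and $\ell_q$ norms on $\R^2$; the lower bound follows from $a^2+b^2 \ge \tfrac12 (a+b)^2 \ge \tfrac12(a^q+b^q)^{2/q}$-type estimates, or more directly from $\max(a,b)^q \le (a^2+b^2)^{q/2}$ together with $a^q+b^q \le 2\max(a,b)^q$. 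Integrating the pointwise bound gives
$$
2^{-q/2}\bigl(\|f_R\|_q^q + \|f_I\|_q^q\bigr) \le \|f\|_q^q \le 2^{(q-2)/2}\bigl(\|f_R\|_q^q + \|f_I\|_q^q\bigr),
$$
and applying the same pointwise bound to the values $a = |f_R(\xi^j)|$, $b = |f_I(\xi^j)|$ and summing gives the identical two-sided relation between $\tfrac1m\sum_j |f(\xi^j)|^q$ and $\tfrac1m\sum_j |f_R(\xi^j)|^q + \tfrac1m\sum_j |f_I(\xi^j)|^q$.

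I would then chain the four inequalities together. For the lower bound on $\tfrac1m\sum_j |f(\xi^j)|^q$, start from the pointwise lower bound, apply $X_N \in \cM(m,q,C_2,C_3)$ to each of $f_R, f_I$ to replace the discrete sums by $C_2\|f_R\|_q^q$ and $C_2\|f_I\|_q^q$, and finally use the integral upper bound to pass from $\|f_R\|_q^q + \|f_I\|_q^q$ back to $\|f\|_q^q$; combining the three numerical factors $2^{-q/2}$, $C_2$, and $2^{-(q-2)/2}$ yields the constant $C_2 2^{-q+1}$, which is at least $C_2 2^{-q-1}$. The upper bound is symmetric, producing $C_3 2^{q-1} \le C_3 2^{q+1}$. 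The constants in the statement are therefore not sharp but safely valid, which is all that is needed. The only mildly delicate point is getting the direction of each of the four inequalities right when chaining, since one uses the \emph{upper} comparison in one norm and the \emph{lower} comparison in the other; there is no real analytic obstacle here, just careful bookkeeping of the exponents of $2$.
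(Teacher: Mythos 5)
Your approach is structurally the same as the paper's: decompose $f = f_R + i f_I$, compare $|f|^q$ pointwise with $|f_R|^q + |f_I|^q$ in both directions, apply the real discretization $X_N \in \cM(m,q,C_2,C_3)$ to $f_R$ and $f_I$ separately, and chain the four inequalities. The paper does exactly this, only with the cruder comparison $\max(|x|,|y|) \le |z| \le |x|+|y|$ (yielding pointwise constants $\tfrac12$ and $2^q$) in place of your exact $\ell_2$-versus-$\ell_q$ computation on $\R^2$.

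There is, however, a concrete flaw in your ``key elementary fact'': for $1 \le q < 2$ the claimed upper bound $(a^2+b^2)^{q/2} \le 2^{(q-2)/2}(a^q+b^q)$ is false --- take $a=1$, $b=0$, where the left side equals $1$ while the right side is $2^{(q-2)/2} < 1$. The norm comparison you invoke, $\|\cdot\|_{\ell_2} \le 2^{1/2-1/q}\|\cdot\|_{\ell_q}$ on $\R^2$, holds only for $q \ge 2$; for $q \le 2$ the correct statement is $(a^2+b^2)^{q/2} \le a^q+b^q$, by subadditivity of $t \mapsto t^{q/2}$. So the constant must be $\max\bigl(1, 2^{(q-2)/2}\bigr)$. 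This error is benign: rerunning your chain with the corrected constant gives the discrete lower bound with constant $C_2\min\bigl(2^{-q/2},\,2^{1-q}\bigr)$ and the upper bound with constant $C_3\max\bigl(2^{q/2},\,2^{q-1}\bigr)$, both of which lie within the stated $C_2 2^{-q-1}$ and $C_3 2^{q+1}$ for all $q \ge 1$, so the Proposition survives. One further small point: your lower pointwise bound $2^{-q/2}(a^q+b^q) \le (a^2+b^2)^{q/2}$ is indeed valid for all $q\ge 1$, but only via your first justification ($a^2+b^2 \ge \tfrac12(a+b)^2$ together with $(a+b)^q \ge a^q+b^q$); the ``more direct'' route through $a^q+b^q \le 2\max(a,b)^q$ only yields the constant $\tfrac12$, which is weaker than $2^{-q/2}$ precisely when $q<2$.
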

\begin{proof} Using a simple inequality for a complex number $z=x+iy$
$$
\max(|x|,|y|) \le |z| \le |x|+|y|
$$
we obtain the following inequalities. Let $\xi=\{\xi^j\}_{j=1}^m$ be such that for all $g\in X_N$
\be\label{A1}
C_2 \|g\|_q^q  \le \frac{1}{m} \sum_{j=1}^m |g(\xi^j)|^q \le C_3\|g\|_q^q.
\ee
Denote
$$
g_\xi := (g(\xi^1),\dots,g(\xi^m)),\quad f_{R,\xi} := (f_R)_\xi,\quad f_{I,\xi} := (f_I)_\xi
$$
and
$$
\|g_\xi\|_{\ell_{q,m}}^q :=  \frac{1}{m} \sum_{j=1}^m |g(\xi^j)|^q .
$$
Then for $f\in \cC_N$ we obtain
$$
\|f\|_q^q \le 2^q(\|f_R\|_q^q + \|f_I\|_q^q) \le 2^qC_2^{-1}(\|f_{R,\xi}\|_{\ell_{q,m}}^q + \|f_{I,\xi}\|_{\ell_{q,m}}^q) \le 2^{q+1}C_2^{-1}\|f_{\xi}\|_{\ell_{q,m}}^q.
$$
and
$$
\|f_{\xi}\|_{\ell_{q,m}}^q \le 2^q(\|f_{R,\xi}\|_{\ell_{q,m}}^q + \|f_{I,\xi}\|_{\ell_{q,m}}^q) \le 
2^qC_3(\|f_R\|_q^q +\|f_I\|_q^q) \le 2^{q+1}C_3\|f\|_q^q.
$$
This proves Proposition \ref{AP1}.
\end{proof}

Thus, it is sufficient to prove Theorem \ref{IT2} for the subspace $\cR\Tr(Q_n^\ga)$ of real trigonometric polynomials from $\Tr(Q_n^\ga)$. Our proof is based on conditional theorems. 
We now formulate the known conditional theorems.   

We begin with the definition of the entropy numbers.
  Let $X$ be a Banach space and let $B_X$ denote the unit ball of $X$ with the center at $0$. Denote by $B_X(y,r)$ a ball with center $y$ and radius $r$: $\{x\in X:\|x-y\|\le r\}$. For a compact set $A$ and a positive number $\e$ we define the covering number $N_\e(A)$
 as follows
$$
N_\e(A) := N_\e(A,X) 
:=\min \{n : \exists y^1,\dots,y^n, y^j\in A :A\subseteq \cup_{j=1}^n B_X(y^j,\e)\}.
$$
It is convenient to consider along with the entropy $H_\e(A,X):= \log_2 N_\e(A,X)$ the entropy numbers $\e_k(A,X)$:
$$
\e_k(A,X)  :=\inf \{\e : \exists y^1,\dots ,y^{2^k} \in A : A \subseteq \cup_{j=1}
^{2^k} B_X(y^j,\e)\}.
$$
In our definition of $N_\e(A)$ and $\e_k(A,X)$ we require $y^j\in A$. In a standard definition of $N_\e(A)$ and $\e_k(A,X)$ this restriction is not imposed. 
However, it is well known (see \cite{Tbook}, p.208) that these characteristics may differ at most by a factor $2$. Throughout the paper we use the following notation for the unit $L_q$ ball of $X_N$
$$
X_N^q:= \{f\in X_N:\, \|f\|_q \le 1\}.
$$

The first conditional theorem in the sampling discretization was proved in \cite{VT159} in the case $q=1$.
\begin{Theorem}\label{AT1} Suppose that a subspace $X_N$ satisfies the condition $(B\ge 1)$
$$
\e_k(X^1_N,L_\infty) \le  B\left\{\begin{array}{ll}  N/k, &\quad k\le N,\\
 2^{-k/N},&\quad k\ge N.\end{array} \right.
$$
Then for large enough absolute constant $C$ there exists a set of  $$m \le CNB(\log_2(2N\log_2(8B)))^2$$ points $\xi^j\in \Omega$, $j=1,\dots,m$,   such that for any $f\in X_N$ 
we have
$$
\frac{1}{2}\|f\|_1 \le \frac{1}{m}\sum_{j=1}^m |f(\xi^j)| \le \frac{3}{2}\|f\|_1.
$$
\end{Theorem}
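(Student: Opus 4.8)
The plan is to realize the two-sided estimate as a uniform control of the empirical process $f\mapsto\frac1m\sum_{j=1}^m|f(\xi^j)|-\|f\|_1$. By homogeneity it suffices to produce points with $\sup_{f\in X_N^1}\bigl|\frac1m\sum_{j=1}^m|f(\xi^j)|-\|f\|_1\bigr|\le\frac12$, since this gives $\frac12\le\frac1m\sum_j|f(\xi^j)|\le\frac32$ whenever $\|f\|_1=1$, and scaling extends it to all $f\in X_N$. I would draw $\xi^1,\dots,\xi^m$ independently with law $\mu$ and aim to show that the \emph{expected} supremum is at most $\frac12$; then the minimum over configurations is at most the mean, which yields one admissible set. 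A standard symmetrization step replaces the expected supremum by a Rademacher average over the class $\{|f|:f\in X_N^1\}$, which I would then estimate by chaining against the nets supplied by the hypothesis on $\e_k(X_N^1,L_\infty)$; these nets also serve the class $\{|f|\}$ because $\||f|-|g|\|_\infty\le\|f-g\|_\infty$.

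The decisive preliminary input is a Nikolskii-type inequality extracted from the case $k=1$. Since $X_N^1$ is convex it contains a segment whose length is its $L_\infty$-diameter, and two balls of radius $\e_1\le BN$ can cover such a segment only if that diameter is at most $4BN$; as $0\in X_N^1$ this gives $\|f\|_\infty\le 4BN\|f\|_1$ on $X_N$. Hence $\|f\|_2^2\le\|f\|_\infty\|f\|_1\le 4BN$ on $X_N^1$, so each summand $|f(\xi^j)|$ has variance $O(BN)$, which is the source of the factor $NB$ in $m$. The same computation controls the chaining increments: if $h=|g|-|g'|$ with $g,g'\in X_N^1$ and $\|g-g'\|_\infty\le r$, then $\|h\|_\infty\le r$ and $\|h\|_1\le\min(2,r)$, so $\mathrm{Var}(h(\xi))\le\|h\|_\infty\|h\|_1\le r\min(2,r)$. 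The gain over the naive bound $r^2$ at coarse scales $r\ge1$ is exactly what prevents the estimate from diverging in $N$.

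I would then chain along dyadic levels $n=0,1,\dots,n^*$, using at level $n$ a net of cardinality $2^{2^n}$ and radius $\e_{2^n}$, stopping at the first $n^*$ with $\e_{2^{n^*}}\le\frac18$; by the hypothesis $B2^{-2^{n^*}/N}\le\frac18$ forces $2^{n^*}\asymp N\log_2(8B)$, hence $n^*+1\asymp\log_2\bigl(2N\log_2(8B)\bigr)=:L$ levels. For each increment I apply Bernstein's inequality with variance proxy $r\min(2,r)$ and uniform bound $r\asymp\e_{2^{n-1}}$, and a union bound over the $2^{2^n}$ increments allocating exponent $\asymp 2^n$. This produces at level $n$ a sub-Gaussian contribution of order $\sqrt{2^{n}\,\e_{2^{n-1}}\min(1,\e_{2^{n-1}})/m}$ and a sub-exponential one of order $2^{n}\e_{2^{n-1}}/m$. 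The whole point of the improved variance is that in every regime—$\e_{2^{n-1}}\asymp BN2^{-n}$ when $2^n\lesssim N$, $\e_{2^{n-1}}\le B2^{-2^{n-1}/N}$ beyond, and $\e_{2^{n-1}}<1$ at the finest scales—the sub-Gaussian term stays $O(\sqrt{NB/m})$ uniformly in $n$, so summing the $L$ levels gives $O\bigl(L\sqrt{NB/m}\bigr)$, while the sub-exponential terms sum to the smaller order $O(NBL/m)$. Choosing $m\ge CNBL^2$ makes the expected chaining fluctuation at most $\frac14$; together with the deterministic remainder $\le 2\e_{2^{n^*}}\le\frac14$ the expected supremum is at most $\frac12$, so some configuration works.

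The hard part will be precisely this variance bookkeeping that renders every chaining level of the same order $\sqrt{NB/m}$: with the crude bound $\mathrm{Var}\le r^2$ the per-level sub-Gaussian terms no longer cancel the factor $2^{n}$ and their sum grows in $N$, so one must exploit that $\|h\|_1$ stays $O(1)$ at large scales while $\|h\|_\infty$ shrinks at small scales, and balance the two Bernstein regimes level by level. A secondary subtlety is the level count: it is the dyadic-in-cardinality indexing (levels $2^n$ rather than $k$) together with the stopping scale $\e_{2^{n^*}}\asymp\frac18$ that generates the doubly-logarithmic $L=\log_2(2N\log_2(8B))$ and the square $L^2$. Passing from the expectation estimate to an actual point set is then routine, either through the mean-exceeds-minimum argument above or through a concentration inequality for the supremum.
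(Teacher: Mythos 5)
Your proposal is correct in its essentials and takes the same route as the proof this paper points to (Theorem \ref{AT1} is quoted from \cite{VT159}, whose argument the paper describes as the Bourgain--Lindenstrauss--Milman concentration result plus the elementary chaining technique of \cite{KoTe}): your Bernstein inequality with variance proxy $\|h\|_\infty\|h\|_1\le r\min(2,r)$ is precisely that concentration ingredient, your extraction of the Nikol'skii inequality $\|f\|_\infty\le 4BN\|f\|_1$ from the $k=1$ entropy bound is the standard step, and your dyadic chaining with stopping scale $\e_{2^{n^*}}\le\frac18$ (forcing $2^{n^*}\asymp N\log_2(8B)$) together with the per-level error allocation reproduces exactly the factor $(\log_2(2N\log_2(8B)))^2$ in the bound on $m$. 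The only cosmetic difference is your framing through expected suprema and symmetrization rather than a direct union bound over the chain with positive probability; just note that after symmetrizing you must apply Bernstein unconditionally to the variables $\epsilon_j h(\xi^j)$ (whose true variance is $\le\|h\|_\infty\|h\|_1$), not conditionally on the sample, since the conditional (Hoeffding) variance is the empirical second moment, which you do not yet control.
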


Proof of Theorem \ref{AT1} in \cite{VT159} is based on the concentration measure result from 
\cite{BLM} (see Lemma 2.1 in \cite{VT159}) and on the elementary chaining type technique 
from \cite{KoTe} (see also \cite{Tbook}, Ch.4). Theorem \ref{AT1} was extended to the case 
$q\in [1,\infty)$ in \cite{DPSTT1}. 

\begin{Theorem}\label{AT2} Let $1\le q<\infty$. Suppose that a subspace $X_N$ satisfies the condition
\be\label{A2}
\e_k(X^q_N,L_\infty) \le  B (N/k)^{1/q}, \quad 1\leq k\le N,
\ee
where $B\ge 1$.
Then for large enough constant $C(q)$ there exists a set of
$$
m \le C(q)NB^{q}(\log_2(2BN))^2
$$
 points $\xi^j\in \Omega$, $j=1,\dots,m$,   such that for any $f\in X_N$
we have
$$
\frac{1}{2}\|f\|_q^q \le \frac{1}{m}\sum_{j=1}^m |f(\xi^j)|^q \le \frac{3}{2}\|f\|_q^q.
$$
\end{Theorem}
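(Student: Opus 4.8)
The plan is to build the sampling set at random and show it works with positive probability. Draw $\xi^1,\dots,\xi^m$ independently according to $\mu$; then for each fixed $f$ the quantity $\frac1m\sum_{j=1}^m|f(\xi^j)|^q$ is an unbiased estimator of $\|f\|_q^q$. Since both sides of the desired inequality are $q$-homogeneous in $f$, it is enough to control the empirical process on the $L_q$ ball, i.e. to prove
$$
\bE\,\sup_{f\in X_N^q}\Big|\frac1m\sum_{j=1}^m|f(\xi^j)|^q-\|f\|_q^q\Big|<\frac12 .
$$
Indeed, once this holds a realization of $\{\xi^j\}$ exists for which the supremum is below $1/2$; applied to $f/\|f\|_q$ this yields $\frac12\|f\|_q^q\le\frac1m\sum_j|f(\xi^j)|^q\le\frac32\|f\|_q^q$ for every $f\in X_N$.

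First I would record the deterministic consequences of the entropy hypothesis. Using that $X_N^q$ is symmetric and contains $0$, the covering at $k=1$ gives a radius bound $R:=\sup_{f\in X_N^q}\|f\|_\infty\le 2\e_1(X_N^q,L_\infty)\le 2BN^{1/q}$. Consequently, for $f\in X_N^q$ the random variable $|f(\xi)|^q$ is bounded by $R^q\asymp B^qN$ and has variance at most $\int|f|^{2q}\,d\mu\le R^q\|f\|_q^q\le R^q$. This places us in the Bernstein regime, where for a single $f$
$$
\bP\Big(\Big|\frac1m\sum_{j=1}^m\big(|f(\xi^j)|^q-\|f\|_q^q\big)\Big|\ge t\Big)\le 2\exp\!\Big(-\frac{cmt^2}{R^q+R^qt}\Big),
$$
so a single function is already controlled once $m\gtrsim B^qN\log(\cdot)$; the additional logarithmic factor in the target bound must be produced by the passage to the uniform estimate.

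Second I would run the elementary chaining of Theorem \ref{AT1} (as in \cite{KoTe}), now carried out in $L_\infty$ and adapted to the power $q$. Using the hypothesis I construct nested $\de_s$-nets of $X_N^q$ at dyadic scales $\de_s=R2^{-s}$, whose logarithmic cardinalities obey $H_{\de_s}\lesssim N(B/\de_s)^q\asymp 2^{sq}$ for $B\le\de_s\le R$, and $H_{\de_s}\lesssim N\log(eB/\de_s)$ for $\de_s\le B$, the latter coming from the volumetric decay of the entropy numbers of an $N$-dimensional body past $k=N$. For each $f$ I telescope $|f(\xi^j)|^q-\|f\|_q^q$ along its chain and bound every link by the Bernstein inequality together with a union bound over the $\exp(CH_{\de_s})$ relevant pairs, exploiting the pointwise estimate $\big||a|^q-|b|^q\big|\le qR^{q-1}|a-b|$ on $[-R,R]$ to control both the range $qR^{q-1}\de_{s-1}$ and the variance of each increment. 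Summing the level thresholds $t_s$ and balancing them against the entropy sum should give $\bE\sup|\cdots|<1/2$ exactly when $m\ge C(q)NB^q(\log_2(2BN))^2$.

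The hard part is the chaining, not the single-function estimate. Because $t\mapsto|t|^q$ is far from globally Lipschitz and its Lipschitz constant $qR^{q-1}\asymp B^{q-1}N^{(q-1)/q}$ on $[-R,R]$ is large, the increments are genuinely sub-exponential rather than sub-Gaussian, so each link has to be treated in the mixed Bernstein regime and the two logarithmic factors arise from distinct sources: one from the number $O(\log(BN))$ of scales the chain must cross, the other from the depth of the Bernstein tail needed to beat the union bound at the scale $\de\asymp B$, where $H_\de\asymp N$. Arranging the truncation of the chain and the allocation of the thresholds $t_s$ so that the increments telescope to $O(1)$ while the sample size lands precisely on $NB^q(\log_2(2BN))^2$, rather than on a higher power of the logarithm, is the delicate point of the argument.
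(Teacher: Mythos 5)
Your overall strategy (i.i.d.\ random points, Bernstein-type concentration, chaining along the entropy nets) is the right family of ideas --- this is how the proof cited by the paper proceeds (the paper does not prove Theorem \ref{AT2} itself; it quotes it from \cite{DPSTT1}, noting it rests on the concentration result of \cite{BLM} plus a chaining argument). But the specific mechanism you propose for the chain increments, namely the global Lipschitz bound $\bigl||a|^q-|b|^q\bigr|\le qR^{q-1}|a-b|$ with $R\asymp BN^{1/q}$, cannot produce the stated bound $m\le C(q)NB^q(\log_2(2BN))^2$ for any $q>1$; this is a genuine gap, not a matter of delicately balancing the thresholds. Quantitatively: at the critical scale $\de\asymp B$ the nets have logarithmic cardinality $H\asymp N$, and your only sup-norm bound on an increment is $M\asymp qR^{q-1}\de\asymp qB^qN^{1-1/q}$. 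Bernstein's inequality beats a union bound over $e^{cN}$ links at threshold $t$ only if $mt^2/(\sigma^2+Mt)\gtrsim N$, and since the thresholds over all scales must sum to $O(1)$ you are forced to take $t=O(1)$ there; the subexponential term alone then demands $m\gtrsim NM\asymp qB^qN^{2-1/q}$, which is polynomially worse in $N$ than the target for every $q>1$ (for $q=2$: $N^{3/2}$ instead of $N\log^2N$). No allocation of the $t_s$ avoids this, because the bottleneck sits at a single scale.

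The loss comes precisely from the Lipschitz step: the increment $|f'|^q-|f''|^q$ can be of size $qR^{q-1}\de$ only on the set where $|f''|$ is comparable to $R$, and that set has measure $\lesssim R^{-q}$ since $\|f''\|_q\le1$; a range-and-variance argument that sees only $\|\cdot\|_\infty$ and the variance of the increment cannot exploit this localization. This is exactly why the elementary chaining of \cite{VT159}, \cite{KoTe} stops at $q=1$ (there the Lipschitz constant is $1$, your computation does close, and one recovers Theorem \ref{AT1}), and why the paper stresses that in \cite{DPSTT1} the $\e$-nets are built ``in a more delicate way'' (sandwiching technique): along the chain one constructs one-sided approximants $u\le |f|^q\le v$ with $\bE(v-u)$ small, so that each link is handled by one-sided estimates and truncation rather than by a global Lipschitz bound. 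That construction (or Kosov's alternative route via entropy in the weaker metric $L_\infty(Y_m)$, as in Theorem \ref{AT3}) is the missing idea; as written, your sketch can only prove the statement with $N^{2-1/q}$ in place of $N(\log_2(2BN))^2$.
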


Note that it is well known that the inequality 
$$
\e_k(X^q_N,L_\infty) \le  B (N/k)^{1/q}, \quad 1\leq k\le N
$$
implies 
$$
\e_k(X^q_N,L_\infty) \le  6B 2^{-k/N}, \quad   k > N.
$$
In the same way as in the proof of Theorem \ref{AT1} proof of Theorem \ref{AT2} in  \cite{DPSTT1} is based on the concentration measure result from 
\cite{BLM}. However, the chaining technique, used in  \cite{DPSTT1} differs from that in 
\cite{VT159}. In  \cite{DPSTT1} the corresponding $\e$-nets are built in a more delicate way 
than in \cite{VT159} (sandwiching technique). In both Theorems \ref{AT1} and \ref{AT2} the conditions are formulated in terms of the entropy numbers in the uniform norm $L_\infty$. Very recently, a new idea in this direction was developed in \cite{Kos}. E. Kosov in \cite{Kos} proves the corresponding   theorem with the conditions imposed on the entropy numbers in a weaker metric than the uniform norm. We now formulate his result. 

Let $Y_s:= \{y_j\}_{j=1}^s \subset \Omega$ be a set of sample points from the domain $\Omega$. Introduce a semi-norm 
$$
\|f\|_{Y_s}:=\|f\|_{L_{\infty}(Y_s)} := \max_{1\le j\le s} |f(y_j)|.
$$
Clearly, for any $Y_s$ we have $\|f\|_{Y_s} \le \|f\|_\infty$. 
The following result is from \cite{Kos} (see Corollary 3.4 there). 

\begin{Theorem}\label{AT3} Let $1\le q<\infty$. There exists a number $C_1(q)>0$ such that for $m$ and $B$ satisfying 
$$
m\ge C_1(q) NB^q (\log N)^{w(q)},\quad w(1):=2, \quad w(q) := \max(q,2)-1, 1<q<\infty,
$$
and for a subspace $X_N$ satisfying the condition: for any set $Y_m\subset \Omega$
\be\label{A3}
\e_k(X^q_N,L_{\infty}(Y_m)) \le  B (N/k)^{1/q}, \quad 1\leq k\le N
\ee
 there are
 points $\xi^j\in \Omega$, $j=1,\dots,m$,   such that for any $f\in X_N$
we have
$$
\frac{1}{2}\|f\|_q^q \le \frac{1}{m}\sum_{j=1}^m |f(\xi^j)|^q \le \frac{3}{2}\|f\|_q^q.
$$
\end{Theorem}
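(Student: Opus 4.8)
The plan is to prove Theorem \ref{AT3} by the probabilistic method, exactly as in Theorems \ref{AT1} and \ref{AT2}: draw the sample points $\xi^1,\dots,\xi^m$ independently according to $\mu$ and show that with positive probability the normalized empirical measure reproduces the $L_q$ norm on $X_N$ up to the factors $\tfrac12,\tfrac32$. Introduce the random quantity
$$
\Delta := \sup_{f\in X_N^q}\left|\frac1m\sum_{j=1}^m|f(\xi^j)|^q - \|f\|_q^q\right| ,
$$
so that, by homogeneity, it suffices to produce a sample with $\Delta\le 1/2$; I would do this by bounding $\bE\,\Delta$ together with a deviation term by a small constant under the stated condition on $m$. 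The upper estimate $\frac1m\sum_j|f(\xi^j)|^q\le\frac32\|f\|_q^q$ and the lower one then follow for every $f\in X_N$ simultaneously.

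The decisive observation --- and the reason the hypothesis is imposed only on the discrete semi-norm $\|\cdot\|_{L_\infty(Y_m)}$ rather than on $\|\cdot\|_\infty$ --- is that the functional $f\mapsto \frac1m\sum_j|f(\xi^j)|^q$ depends on $f$ solely through its values on $Y_m=\{\xi^j\}_{j=1}^m$. After Rademacher symmetrization one studies the empirical process $f\mapsto \frac1m\sum_j r_j|f(\xi^j)|^q$, and, via the elementary inequality $\big||a|^q-|b|^q\big|\le q(\max(|a|,|b|))^{q-1}|a-b|$, its increments are controlled by $\|f-g\|_{L_\infty(Y_m)}$ weighted by the sizes of $f,g$ on $Y_m$. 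I would therefore run a chaining argument in which the covering nets for $X_N^q$ are taken with respect to $\|\cdot\|_{L_\infty(Y_m)}$; the entropy numbers governing the chaining links are then precisely the $\e_k(X^q_N,L_\infty(Y_m))$, for which hypothesis \eqref{A3} supplies the bound $B(N/k)^{1/q}$ uniformly over all admissible $Y_m$, in particular over the random one.

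For the per-link probabilistic estimate I would invoke the concentration-of-measure inequality from \cite{BLM} (as in the proofs of Theorems \ref{AT1} and \ref{AT2}) to obtain Bernstein-type tails for the increments of the process along the chain, then sum these tails over the dyadic scales $k=2^\ell$, $1\le k\le N$. Balancing the resulting entropy sum against the target accuracy $1/2$ yields the admissible range $m\ge C_1(q)NB^q(\log N)^{w(q)}$: the factor $\log N$ reflects the number of chaining levels needed to pass from the discrete radius of $X_N^q$ (of order $BN^{1/q}$, read off from $\e_1$) down to the final resolution, while the growth of the log-exponent to $w(q)=q-1$ for $q>2$ comes from the weight $(\max(|f|,|g|))^{q-1}$ in the increment bound and the correspondingly heavier tails of the summands $|f(\xi^j)|^q$; the value $q\le 2$ keeps this exponent bounded, and the special case $w(1)=2$ arises from the extra logarithmic loss in handling the non-smooth $q=1$ increment.

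The main obstacle is the random, sample-dependent metric: unlike the classical situation underlying Theorems \ref{AT1} and \ref{AT2}, the chaining semi-norm $\|\cdot\|_{L_\infty(Y_m)}$ is itself a function of the random points, so the nets cannot be fixed before sampling. The resolution is exactly the uniformity of \eqref{A3} --- it holds for \emph{every} $Y_m$ --- which permits constructing the nets after conditioning on $Y_m$ while still bounding their cardinalities deterministically. Making the conditioning interact cleanly with the symmetrization and the concentration tails, so that all chaining estimates remain valid conditionally on $Y_m$ and then integrate out, is the technical heart of the argument and is precisely where Kosov's refinement over the $L_\infty$-based chaining of \cite{VT159} and \cite{DPSTT1} lies.
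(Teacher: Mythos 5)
First, a point of orientation: the paper you are being compared against does not actually prove Theorem \ref{AT3} --- it is quoted verbatim from \cite{Kos} (Corollary 3.4 there), so the only available comparison is with Kosov's own argument. At the level of architecture your outline reconstructs that argument correctly: i.i.d. sampling, reduction to the supremum $\Delta$ of the empirical deviation process over $X_N^q$, Rademacher symmetrization, and chaining performed in the semi-norm $\|\cdot\|_{L_\infty(Y_m)}$ of the realized sample, which is legitimate precisely because the entropy hypothesis of Theorem \ref{AT3} is assumed for \emph{every} $Y_m$, so the nets may be chosen after conditioning on the sample. You have also correctly identified this as the point where Kosov improves on the $L_\infty$-based chaining of \cite{VT159} and \cite{DPSTT1}.

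Nevertheless, what you wrote is a programme, not a proof, and the gap sits exactly where the theorem lives. The statement is quantitative: $m\ge C_1(q)NB^q(\log N)^{w(q)}$ with $w(1)=2$, $w(q)=\max(q,2)-1$ suffices. Your text never derives these exponents: no per-link tail bound is stated (after conditioning on the sample the randomness sits in the Rademacher signs, so what is needed is a Bernstein--Hoeffding inequality for sign sums whose variance and sup parameters are expressed through $\|\cdot\|_{L_\infty(Y_m)}$ via your increment inequality --- the sampling concentration of \cite{BLM} plays no role once you have conditioned on the points), no summation over dyadic scales is carried out, and ``balancing the resulting entropy sum against the target accuracy'' is asserted rather than performed. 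Moreover, the heuristics offered in place of this computation are partly wrong: for $q=1$ the increment map is $1$-Lipschitz, since $\bigl||a|-|b|\bigr|\le |a-b|$, i.e.\ it is the most regular case of all, so the extra logarithm in $w(1)=2$ cannot ``arise from the non-smooth $q=1$ increment''; it arises because with $\e_{2^j}\asymp BN2^{-j}$ the chaining sum $\sum_{j\le\log_2 N}2^j\e_{2^j}$ consists of $\asymp\log N$ terms of equal size $\asymp BN$, whereas for $q>1$ the corresponding sum is geometric and contributes no logarithm. Until the chaining computation is actually executed and shown to close under the stated bound on $m$, the proposal cannot be accepted as a proof of Theorem \ref{AT3}.
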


{\bf Proof of Theorem \ref{IT2}.} We now proceed to the proof of Theorem \ref{IT2}. We treat separately four cases. We have 
\be\label{A3}
X_N = \cR\Tr(Q_n^\ga),\qquad  N=|Q_n^\ga| \asymp 2^n n^{\nu-1}.
\ee

{\bf 1. Case $q=1$.} We use Theorem \ref{AT1} here. We obtain the required bounds on the entropy numbers from Proposition \ref{BP1}.  It gives us $B=C(\ga)n$. Therefore, by Theorem \ref{AT1} we obtain that
\be\label{A4}
\cR\Tr(Q_n^\ga) \in \cM(m,1), \quad \text{provided}\quad m \ge C(\ga)|Q_n^\ga| n^3,
\ee
which is claimed in Theorem \ref{IT2}.

{\bf 2. Case $q\in (1,2]$.} We use Theorem \ref{AT3} here. We obtain the required bounds on the entropy numbers from Proposition \ref{BP1}.  It gives us $B=C(q,\ga)n^{1/q}$. Therefore, by Theorem \ref{AT3} we obtain that
\be\label{A5}
\cR\Tr(Q_n^\ga) \in \cM(m,q), \quad \text{provided}\quad m \ge C(q,\ga)|Q_n^\ga| n^2,
\ee
which is claimed in Theorem \ref{IT2}.

{\bf 3. Case $q\in (2,3)$.} We use Theorem \ref{AT3} here. We obtain the required bounds on the entropy numbers from Lemma \ref{BL2}. It is well known that the condition $\Tr(Q_n^\ga) \in \cM(s,\infty)$ with $s\le C(d)2^{nd}$ holds (see the argument in the proof of Proposition \ref{BP1}). It gives us $B=C(q,d)n^{1/q}(MN^{-1/q})$ where $M$ is from the Nikol'skii inequality (\ref{B8}). It is known (see \cite{Tem4} and \cite{Tmon})
that 
$$
M\asymp 2^{n/q} n^{(\nu-1)(1-1/q)}.
$$
Therefore,
$$
MN^{-1/q} \asymp n^{(\nu-1)(1-2/q)}, \quad B^q\asymp n^{(\nu-1)(q-2)+1}.
$$
By Theorem \ref{AT3} we obtain that
\be\label{A6}
\cR\Tr(Q_n^\ga) \in \cM(m,q), \quad \text{provided}\quad m \ge C(q,\ga)|Q_n^\ga| n^{(\nu-1)(q-2) + q},
\ee
which is claimed in Theorem \ref{IT2}.

{\bf 4. Case $q\in [3,\infty)$.} We use Theorem \ref{AT2} here. In the same way as above in Case 3 we get
$$
B^q\asymp n^{(\nu-1)(q-2)+1}.
$$
By Theorem \ref{AT2} we obtain that
\be\label{A7}
\cR\Tr(Q_n^\ga) \in \cM(m,q), \quad \text{provided}\quad m \ge C(q,\ga)|Q_n^\ga| n^{(\nu-1)(q-2) + 3},
\ee
which is claimed in Theorem \ref{IT2}.

\section{Bounds of the entropy numbers}
\label{B}

In this section we obtain bounds of the entropy numbers $\e_k(\cR\Tr(Q_n^\ga)^q,L_\infty)$ in the form (\ref{A2}). Some results on the entropy numbers $\e_k(\cR\Tr(Q_n)^q,L_\infty)$ can be found in \cite{VTbookMA}, Ch. 7. However, those results are designed for proving upper bounds of the entropy numbers of classes of functions with mixed smoothness. Here 
we obtain bounds, which serve better for the sampling discretization (see a detailed discussion of such a comparison in \cite{VT159}, Section 7). We begin with the case $q\in [1,2]$. 
The following result is from \cite{DPSTT2}.

\begin{Theorem}\label{BT1}	Assume that  $X_N$ is  an $N$-dimensional subspace of $L_\infty(\Omega)$ satisfying the following two conditions:
	\begin{enumerate}
		\item [{\bf \textup{(i)} }] There exists a constant $K_1>1$ such that
		\begin{equation}\label{B1}
		\|f\|_\infty \leq (K_1 N)^{1/2}\|f\|_2,\   \ \forall f\in X_N.
		\end{equation}

		\item [{\bf \textup{(ii)}}] There exists a constant $K_2>1$  such that
		\begin{equation}\label{B2}
		\|f\|_\infty \leq K_2 \|f\|_{\log N},\   \ \forall f\in X_N.
		\end{equation}
	\end{enumerate}
Then for  each $1\leq q\leq 2$,  there exists a constant $C(q)>0$ depending only on $q$ such that
	\begin{equation}\label{B3}
	\e_k (X_N^q, L_\infty) \leq C(q) (K_1K_2^2\log N)^{1/q} \begin{cases}
	\bigl(N/k\bigr)^{1/q},&\  \ \text{if $1\leq k\leq N$},\\
	2^{-k/N}, &\   \ \text{if $k>N$}.
	\end{cases}
	\end{equation}
 
\end{Theorem}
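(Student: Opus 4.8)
The plan is to reduce the two Nikol'skii-type hypotheses to a finite-dimensional entropy computation for the identity map $\ell_q\to\ell_\infty$, for which sharp estimates (Sch\"utt-type bounds) are available, and to extract the factor $\log N$ from the size of a sup-norm norming set. First I would dispose of the two easy ranges of $k$. For $k>N$ the asserted tail $2^{-k/N}$ follows from the range $1\le k\le N$ by the standard volumetric halving inside the $N$-dimensional space $X_N$ (the same implication recorded after Theorem \ref{AT2}): one covers $X_N^q$ by $2^N$ balls of the radius attained at $k=N$ and subdivides each, which lives in an $N$-dimensional normed space, at a cost $3\cdot2^{-(k-N)/N}$. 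For $k<\log N$ I would use only the diameter: condition (i), via the interpolation $\|f\|_2^2\le\|f\|_\infty^{2-q}\|f\|_q^{q}$ combined with $\|f\|_\infty\le(K_1N)^{1/2}\|f\|_2$, yields the Nikol'skii bound $\|f\|_\infty\le(K_1N)^{1/q}\|f\|_q$, so $\e_k(X_N^q,L_\infty)\le(K_1N)^{1/q}$, which is already dominated by the claimed $(K_1K_2^2\log N)^{1/q}(N/k)^{1/q}$ once $k<\log N$ and $K_2\ge1$.

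The heart of the matter is the range $\log N\le k\le N$. Here the plan is to discretize the relevant norms and then invoke Sch\"utt's entropy numbers for $\mathrm{id}\colon\ell_q^{m}\to\ell_\infty^{m}$. Condition (i) is the frame/Christoffel-type hypothesis that, through the machinery behind Theorem \ref{IT1}, supplies an $L_2$-discretizing set of size $\asymp N$ and, through the norm comparisons above, controls the $L_q$-to-$L_\infty$ geometry with the correct power of $N$. Condition (ii) is what tames the sup-norm: since $\|f\|_\infty\le K_2\|f\|_{\log N}$ and the integral $L_{\log N}$-norm is itself captured, up to a constant, by the maximum over a finite norming set $T$, one gets $\|f\|_\infty\asymp_{K_2}\max_{t\in T}|f(t)|$ with $|T|$ polynomial in $N$, i.e. $\log|T|\asymp\log N$. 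After the identification $f\mapsto(f(t))_{t\in T}$ one is left to estimate $\e_k$ of a suitably scaled $\ell_q$-ball in $\ell_\infty^{|T|}$; the bound $\e_k(B_{\ell_q^m},\ell_\infty^m)\asymp(k^{-1}\log(m/k+1))^{1/q}$ in the regime $\log m\le k\le m$, evaluated at $m=|T|$ with $\log|T|\asymp\log N$, is precisely what turns the trivial $N^{1/q}$ into $(\log N)^{1/q}$ and produces the decay $(N/k)^{1/q}$. The constants $K_1$ (from the $L_2$/$L_q$ scaling) and $K_2$ (entering once from the norming-set comparison and once more through the size of $T$) accumulate as the claimed $K_1K_2^2$.

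The main obstacle is exactly this simultaneous discretization with matched parameters. One cannot discretize $L_\infty$ with $O(N)$ points — the negative result quoted in the introduction forces $|T|\gtrsim N^{1+c}$ — so $T$ must be genuinely super-linear, and the whole point of hypothesis (ii), with its carefully chosen exponent $\log N$, is to keep $\log|T|$ of the optimal order $\log N$ while allowing $|T|$ to be polynomially large; it is this $\log|T|$ that surfaces as the $\log N$ factor. The delicate bookkeeping is twofold: first, arranging that the $L_q$-ball and the sup-norm discretizations are compatible so that the passage to $\ell_q^{|T|}\to\ell_\infty^{|T|}$ does not introduce a spurious power $|T|^{1/q}$ in the scale — which forces one to use the $\asymp N$-point $L_2$ structure from (i) for the $L_q$ side and the larger set only for the sup-norm side; and second, tracking the exponents of $K_1$ and $K_2$ through Sch\"utt's inequality so that they emerge exactly as $K_1K_2^2$ and so that the transition points $k\asymp\log N$ and $k\asymp N$ line up with the three regimes of the statement. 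Checking that Sch\"utt's bound is applied only in its valid range $\log|T|\le k\le|T|$, with $k<\log N$ and $k>N$ covered by the diameter and halving arguments, completes the scheme.
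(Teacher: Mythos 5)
The paper itself contains no proof of Theorem \ref{BT1}: it is imported verbatim from \cite{DPSTT2}, so your proposal can only be measured against the method of that source. Your two peripheral ranges are handled correctly: the tail $k>N$ does follow from the range $k\le N$ by the standard halving argument (the paper records exactly this implication after Theorem \ref{AT2}), and for $k<\log N$ the diameter bound $\e_k(X_N^q,L_\infty)\le (K_1N)^{1/q}$, obtained from (i) via $\|f\|_2^2\le\|f\|_\infty^{2-q}\|f\|_q^q$, is indeed dominated by the claimed bound since $K_2\ge 1$. The problem is the main range $\log N\le k\le N$, where your scheme cannot work.

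The fatal gap is the reduction to Sch\"utt's theorem for the \emph{full} coordinate ball. After the embedding $f\mapsto(f(t))_{t\in T}$, the only way your plan uses Sch\"utt is through an inclusion of the image of $X_N^q$ into a scaled ball $sB_{\ell_q}^{|T|}$, followed by monotonicity of entropy. But the scaling $s$ is forced to be of order $|T|^{1/q}$, not $N^{1/q}$: taking $f\equiv 1$ (or any function with $|f|\asymp 1$, which the trigonometric spaces of interest contain) gives $\|f\|_{L_q(\mu)}=1$ while $\sum_{t\in T}|f(t)|^q=|T|$, so every coordinate $\ell_q$-ball containing the image has radius at least $|T|^{1/q}$. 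Since Sch\"utt's estimate is two-sided, the full ball $|T|^{1/q}B_{\ell_q}^{|T|}$ genuinely has $\e_k\asymp\bigl(|T|\,k^{-1}\log(|T|/k+1)\bigr)^{1/q}$, which at $k\asymp N$ is of order $\bigl((|T|/N)\log(|T|/N+1)\bigr)^{1/q}$; for this to match the target $(\log N)^{1/q}$ one needs $|T|\lesssim N\log N$, whereas the negative result you yourself invoke forces any sup-norming set to have $|T|\gtrsim N^{1+c}$. So the route overshoots by a polynomial factor $N^{c/q}$, and the two requirements on $T$ (near-linear size for the $\ell_q$ scaling, super-linear size for norming the sup) are mutually exclusive. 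Your proposed fix --- ``use the $\asymp N$-point $L_2$ structure from (i) for the $L_q$ side'' --- is not coherent: an $L_2$-discretizing set of $O(N)$ points neither discretizes the $L_q$ norm for $q\neq 2$ nor can it norm $\|\cdot\|_\infty$, so there is no single index set on which both structures live. The defect is structural: Sch\"utt's theorem discards exactly the information that makes the theorem true, namely that one is covering an $N$-dimensional \emph{section} of a ball sitting inside a much higher-dimensional cube. The proof in \cite{DPSTT2} exploits this dimensionality directly: for $q=2$ one applies the dual Sudakov (Pajor--Tomczak-Jaegermann) inequality $\e_k(X_N^2,L_\infty)\lesssim k^{-1/2}\,\bE\bigl\|\sum_i g_iu_i\bigr\|_\infty$ for an orthonormal basis $(u_i)$ of $X_N$ in $L_2(\mu)$, where condition (ii) and then condition (i) give $\bE\|\sum_i g_iu_i\|_\infty\le K_2\,\bE\|\sum_i g_iu_i\|_{\log N}\lesssim K_2(\log N)^{1/2}(K_1N)^{1/2}$ --- this is precisely why the exponent in (ii) is $\log N$ --- and the case $1\le q<2$ is then deduced from $q=2$ using the same interpolation inequality you used for the diameter bound, by decomposing $X_N^q$ according to the size of $\|f\|_\infty$. (A secondary, in principle fixable, gap: your norming set $T$ with $\log|T|\asymp\log N$ is asserted, not constructed; under (i)--(ii) alone this itself requires a discretization lemma for the $L_{\log N}$ norm with polynomially many points, of the kind proved in \cite{DPSTT2}.)
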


We now apply Theorem \ref{BT1} to the case $X_N=\cR\Tr(Q_n^\ga)$. Clearly, in this case 
$N=\dim \cR\Tr(Q_n^\ga) = |Q_n^\ga| \asymp 2^n n^{\nu-1}$. 

\begin{Proposition}\label{BP1} Let $q\in [1,2]$. We have the bound
\be\label{B4}
\e_k(\cR\Tr(Q_n^\ga)^q,L_\infty) \le C(q,\ga) n^{1/q} (|Q_n^\ga|/k)^{1/q}.
\ee
\end{Proposition}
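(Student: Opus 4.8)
The plan is to apply Theorem \ref{BT1} with $X_N = \cR\Tr(Q_n^\ga)$, so the entire task reduces to verifying the two hypotheses (i) and (ii) and then reading off the value of the constant $K_1K_2^2$ in the resulting bound (\ref{B3}). Since Theorem \ref{BT1} already delivers a bound of exactly the shape $(N/k)^{1/q}$ times a prefactor $(K_1K_2^2\log N)^{1/q}$, and since $\log N \asymp n$ because $N \asymp 2^n n^{\nu-1}$, the claimed bound (\ref{B4}) will follow once I show that $K_1$ and $K_2$ can both be chosen to be absolute constants (depending at most on $\ga$). Indeed, if $K_1, K_2 = O(1)$, the prefactor becomes $C(q,\ga)(\log N)^{1/q} \asymp C(q,\ga) n^{1/q}$, which is precisely the factor $n^{1/q}$ appearing in (\ref{B4}).

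First I would verify condition (i), the Nikol'skii-type inequality $\|f\|_\infty \le (K_1 N)^{1/2}\|f\|_2$ for $f \in \cR\Tr(Q_n^\ga)$. This is a classical fact for trigonometric polynomials with frequencies in a finite set $Q$: the reproducing (Dirichlet-type) kernel of $\Tr(Q)$ gives $\|f\|_\infty \le |Q|^{1/2}\|f\|_2$ by Cauchy--Schwarz, since $f(\bx) = \langle f, D_Q(\bx - \cdot)\rangle$ with $\|D_Q\|_2^2 = |Q|$. Because $N = |Q_n^\ga|$ here, this gives (i) with $K_1$ an absolute constant (one may need a fixed constant slightly larger than $1$ to accommodate the real-part restriction and the requirement $K_1 > 1$, but this costs nothing).

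Next I would verify condition (ii), the inequality $\|f\|_\infty \le K_2\|f\|_{\log N}$. This is the genuinely structural input and is where the hyperbolic-cross geometry enters. The natural route is to invoke a known small-ball/$L_p$ comparison for hyperbolic cross polynomials: for $f \in \Tr(Q_n^\ga)$ one has a Nikol'skii inequality between $L_\infty$ and $L_p$ of the form $\|f\|_\infty \le C(\ga)\, p^{a}\, \|f\|_p$ for an appropriate exponent, and choosing $p \asymp \log N \asymp n$ absorbs the growth in $p$ into a constant. Concretely, the known Nikol'skii inequalities for $\Tr(Q_n^\ga)$ (the same body of estimates from \cite{Tem4}, \cite{Tmon} that is quoted later in the proof of Theorem \ref{IT2} via (\ref{B8})) yield $\|f\|_\infty \le C(\ga)\|f\|_{p}$ once $p$ is of order $n$, giving (ii) with $K_2 = C(\ga)$ an absolute constant.

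I expect condition (ii) to be the main obstacle, since condition (i) is a one-line Cauchy--Schwarz argument whereas (ii) requires the sharp dependence of the $L_\infty$-to-$L_p$ Nikol'skii constant on $p$ for the specific hyperbolic-cross frequency set $Q_n^\ga$, and it is exactly the logarithmic calibration $p \asymp \log N$ that makes the constant $K_2$ bounded rather than growing. Once both constants are pinned down as $O_\ga(1)$, substituting $K_1 K_2^2 = C(\ga)$ and $\log N \asymp n$ into (\ref{B3}) gives $\e_k(\cR\Tr(Q_n^\ga)^q, L_\infty) \le C(q,\ga)\, n^{1/q}\,(|Q_n^\ga|/k)^{1/q}$ for $1 \le k \le N$, which is the assertion of Proposition \ref{BP1}.
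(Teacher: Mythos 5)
Your overall strategy coincides with the paper's: apply Theorem \ref{BT1} to $X_N=\cR\Tr(Q_n^\ga)$, show that $K_1$ and $K_2$ are constants depending only on $\ga$, and use $\log N\asymp n$. Your verification of condition (i) is correct and matches the paper ($K_1=1$ via Cauchy--Schwarz with the Dirichlet kernel). The gap is exactly where you predicted it: condition (ii). You propose to obtain $\|f\|_\infty\le C(\ga)\|f\|_p$ for $p\asymp n$ from the known Nikol'skii inequalities of \cite{Tem4}, \cite{Tmon}, i.e.\ from the estimate quoted in (\ref{B8}), $\|f\|_\infty\le C(q,\ga)\,2^{n/q}n^{(\nu-1)(1-1/q)}\|f\|_q$. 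This does not deliver what you need, for two reasons. First, even taking that estimate at face value with $q=p\asymp n$, only the factor $2^{n/p}$ becomes constant; the factor $n^{(\nu-1)(1-1/p)}\asymp n^{\nu-1}$ does not. So for $\nu\ge 2$ you would get at best $K_2\asymp n^{\nu-1}$, hence a prefactor $(K_1K_2^2\log N)^{1/q}\asymp n^{(2\nu-1)/q}$ in (\ref{B3}), strictly weaker than the claimed $n^{1/q}$. Second, and more fundamentally, those estimates are statements for fixed $q$, with implied constants depending on $q$ (typically growing, e.g.\ through Littlewood--Paley constants); nothing in them controls the constant in the regime where $q$ itself grows like $n$, which is precisely the regime you need. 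Even in the case $\nu=1$, where the polynomial factor is absent, this second issue remains.

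The paper proves (ii) by a different mechanism, which is the idea you are missing: sampling discretization that is uniform in $p$. Since $Q_n^\ga\subset\Pi_n:=[-2^n,2^n]^d$, the classical Marcinkiewicz--Zygmund-type result (\cite{VTbookMA}, Theorem 3.3.15) provides a set $Y_s$ of $s\le C_1(d)2^{nd}$ points such that $C_2(d)\|f\|_p\le\|f_{Y_s}\|_{\ell_{p,s}}\le C_3(d)\|f\|_p$ for all $1\le p\le\infty$ \emph{simultaneously}, with constants independent of $p$ (this is (\ref{B5})). Combining this with the elementary inequality $\|\bx\|_{\ell_\infty}\le C(a)\|\bx\|_{\ell_{a\log s,s}}$ in $\R^s$ (this is (\ref{B6})) gives $\|f\|_\infty\le C\|f_{Y_s}\|_{\ell_{\infty,s}}\le C\|f_{Y_s}\|_{\ell_{a\log s,s}}\le C\|f\|_{a\log s}$, and since $\log s\asymp dn\asymp\log N$ one can choose $a$ so that $a\log s\le\log N$, whence (ii) with $K_2=C(\ga)$ by monotonicity of $L_p$ norms. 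Note that this is where the hyperbolic cross geometry actually enters: not through sharp Nikol'skii constants, but through the fact that the enclosing cube requires only polynomially many (in $N$) sample points, so that $\log s\asymp\log N$. Your intuition that (ii) is the crux and that the calibration $p\asymp\log N$ is what keeps $K_2$ bounded is right, but the tool you invoke cannot establish it.
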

\begin{proof} It is well known and easy to check that condition (i) is satisfied with $K_1=1$. 
Condition (ii) follows from known results on sampling discretization. We now explain it.
Let $\Pi_n := [-2^n,2^n]^d$ be a $d$ dimensional cube. It is known (see, for instance, \cite{VTbookMA}, p.102, Theorem 3.3.15) that there exists a set $Y_s$ with $s \le C_1(d)2^{nd}$ such that for all $1\le p\le \infty$ and $f\in \cR\Tr(\Pi_n)$
\be\label{B5}
C_2(d)\|f\|_p \le \|f_{Y_s}\|_{\ell_{p,s}} \le C_3(d)\|f\|_p.
\ee
It remains to note that $Q_n^\ga \subset \Pi_n$ and that in $\R^s$ we have 
\be\label{B6}
\|\bx\|_{\ell_\infty} \le C(a) \|\bx\|_{\ell_{a\log s,s}}.
\ee
\end{proof}

We now proceed to the case $q\in (2,\infty)$. We will use the following result from \cite{Kos},
which was proved with a help of deep results from functional analysis (see \cite{Tal}, p.552, Lemma 16.5.4). 

\begin{Lemma}\label{BL1} Let $q\in (2,\infty)$. Assume that for any $f\in X_N$ we have
\be\label{B6'}
\|f\|_\infty \le M\|f\|_q
\ee
with some constant $M$. Then for $k\in [1,N]$ we have for any $Y_s$
\be\label{B7}
\e_k(X_N^q, L_\infty(Y_s)) \le C(q)(\log s)^{1/q} (MN^{-1/q})(N/k)^{1/q} .
\ee
\end{Lemma}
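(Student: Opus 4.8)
The plan is to read $\|\cdot\|_{Y_s}$ as an $\ell_\infty^s$ norm, pass to the adjoint operator, and then invoke an entropy estimate for convex hulls of finitely many points in a space of nontrivial type; this last estimate is the deep functional-analytic input, and it is what Lemma 16.5.4 of \cite{Tal} (going back to \cite{BLM}) supplies. First I would introduce the sampling operator $T\colon(X_N,\|\cdot\|_q)\to\ell_\infty^s$, $Tf:=(f(y_1),\dots,f(y_s))$, so that $\|Tf\|_{\ell_\infty^s}=\|f\|_{Y_s}$ and hence $\e_k(X_N^q,L_\infty(Y_s))$ is exactly the $k$-th entropy number of $T$. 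The hypothesis (\ref{B6'}) enters only through the operator norm: for $f\in X_N$ we have $\|f\|_{Y_s}\le\|f\|_\infty\le M\|f\|_q$, so $\|T\|\le M$.

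The key idea is to dualize. Up to an absolute constant, the entropy numbers of $T$ and of its adjoint $T^{*}\colon\ell_1^s\to(X_N,\|\cdot\|_q)^{*}$ coincide (the duality theorem for entropy numbers). Now $T^{*}e_j=\phi_j$, where $\phi_j\colon f\mapsto f(y_j)$ is the $j$-th point-evaluation functional, and $\|\phi_j\|_{(X_N,\|\cdot\|_q)^{*}}=\sup_{\|f\|_q\le1}|f(y_j)|\le M$. Thus $T^{*}$ carries the unit ball of $\ell_1^s$ onto the absolutely convex hull of the $s$ functionals $\phi_1,\dots,\phi_s$, each of norm at most $M$, sitting inside the $N$-dimensional space $(X_N,\|\cdot\|_q)^{*}$. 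The crucial structural point is that this dual space is a quotient of $(L_q)^{*}=L_{q'}$ with $q'=q/(q-1)\in(1,2)$, and therefore has type $q'$ with a type constant depending only on $q$.

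I would then apply the Maurey--Bourgain--Lindenstrauss--Milman entropy estimate for the absolutely convex hull of $s$ points in a space of type $r$: its $k$-th entropy number is at most $C(r)(\log s)^{1/r'}k^{-1/r'}$ times the maximal norm of a generating point, where $r'=r/(r-1)$ and $C(r)$ absorbs the type constant. With $r=q'$ one has $r'=q$ and $1/r'=1/q$, so this yields $\e_k(T^{*})\le C(q)(\log s)^{1/q}M\,k^{-1/q}$. Rewriting $k^{-1/q}=N^{-1/q}(N/k)^{1/q}$ and recalling $\e_k(T)\asymp\e_k(T^{*})$ gives exactly (\ref{B7}) in the range $1\le k\le N$.

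The genuinely hard part is this last step, i.e.\ the type-based entropy bound for convex hulls, which is precisely the lemma quoted from \cite{Tal}. Two features are delicate and are exactly what force one out of the elementary estimates (i)--(ii) that sufficed for $q\in[1,2]$ in Theorem \ref{BT1}. First, the decay must be $k^{-1/q}$, which is slower than the $k^{-1/2}$ that a naive empirical-average (central-limit) argument would give, and it is produced by the genuine type-$q'$ geometry of $L_{q'}$-quotients rather than by second-moment cancellation. Second, one must extract the sharp power $(\log s)^{1/q}$ of the sampling-set size, the $q$-analogue of the Sudakov factor $\sqrt{\log s}$ familiar from $q=2$; this, together with the entropy-duality step, is where the ``deep results from functional analysis'' are indispensable.
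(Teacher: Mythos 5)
You should first note that the paper contains no proof of Lemma \ref{BL1} at all: it is imported from \cite{Kos}, with the remark that it rests on deep functional-analytic results (Lemma 16.5.4 of \cite{Tal}). So your attempt can only be judged as a reconstruction of the cited argument, not against an internal proof. Your architecture is the natural one, and most of it is correct: the sampling operator $Tf=(f(y_1),\dots,f(y_s))$ identifies $\e_k(X_N^q,L_\infty(Y_s))$ with the entropy numbers of $T\colon (X_N,\|\cdot\|_q)\to\ell_\infty^s$; the adjoint $T^*$ carries the unit ball of $\ell_1^s$ onto the absolutely convex hull of the evaluation functionals $\phi_j$, whose dual norms are at most $M$ by (\ref{B6'}); the dual of $(X_N,\|\cdot\|_q)$ is a quotient of $L_{q'}$, $q'=q/(q-1)\in(1,2)$, hence of type $q'$ with constant depending only on $q$; and the Maurey--Carl entropy estimate for operators from $\ell_1^s$ into a type-$q'$ space gives $\e_k(T^*)\le C(q)M(\log s)^{1/q}k^{-1/q}$, which is the right-hand side of (\ref{B7}) after the trivial rewriting. (A routine point you skip: that estimate covers $k\le s$; if $s<k\le N$, which the statement allows since $Y_s$ is arbitrary, one finishes with the standard exponential decay of entropy numbers of a rank-$s$ operator.)

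The genuine gap is the duality step. You assert that ``up to an absolute constant, the entropy numbers of $T$ and of its adjoint $T^*$ coincide (the duality theorem for entropy numbers).'' No such theorem exists: duality of entropy numbers with absolute constants is a well-known open conjecture, not a theorem. What is known is duality when one of the two spaces is a Hilbert space (Artstein--Milman--Szarek), which is not the case here, and the theorem of Bourgain, Pajor, Szarek and Tomczak-Jaegermann, which yields $\e_k(T)\le b\,\e_{k/a}(T^*)$ (and the converse) with constants $a,b$ depending on the $K$-convexity constant of one of the spaces. Your argument can be repaired exactly because $(X_N,\|\cdot\|_q)$ is a subspace of $L_q$ with $2<q<\infty$, hence $K$-convex with constant depending only on $q$ (the codomain $\ell_\infty^s$ is useless for this purpose, as its $K$-convexity constant grows like $\log s$); the index dilation is harmless against the power bound $k^{-1/q}$, and $q$-dependent constants are permitted in (\ref{B7}). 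But this justification must be supplied: as written, your proof leans on an unproven general principle, and the $K$-convexity input is precisely part of the ``deep results from functional analysis'' that the paper alludes to. With that repair, your route closes and is, in spirit, the argument underlying the result quoted from \cite{Kos}.
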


We would like to estimate the entropy numbers in the uniform norm. For that purpose we derive 
from Lemma \ref{BL1} the following statement.

\begin{Lemma}\label{BL2} Let $q\in (2,\infty)$. Assume that for any $f\in X_N$ we have
\be\label{B8}
\|f\|_\infty \le M\|f\|_q
\ee
with some constant $M$. Also, assume that $X_N \in \cM(s,\infty)$ with $s\le aN^c$.
Then for $k\in [1,N]$ we have  
\be\label{B9}
\e_k(X_N^q, L_\infty) \le C(q,a,c)(\log N)^{1/q} (MN^{-1/q})(N/k)^{1/q} .
\ee
\end{Lemma}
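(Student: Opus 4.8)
The plan is to derive Lemma~\ref{BL2} from Lemma~\ref{BL1} by converting the $L_\infty(Y_s)$-entropy bound, valid for \emph{every} finite sample set $Y_s$, into a genuine $L_\infty$-entropy bound. The bridge between these two is the discretization hypothesis $X_N\in\cM(s,\infty)$ with $s\le aN^c$: this furnishes one particular set $Y_s$ of cardinality $s\le aN^c$ on which the sup-norm of every $f\in X_N$ is controlled from below by a fixed fraction of its true sup-norm. Concretely, the $\cM(s,\infty)$ property (see (\ref{I.2})) gives a constant $C_1>0$ with $C_1\|f\|_\infty\le\|f\|_{Y_s}$ for all $f\in X_N$, while trivially $\|f\|_{Y_s}\le\|f\|_\infty$.

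First I would fix such a set $Y_s$ with $s\le aN^c$ coming from the assumption $X_N\in\cM(s,\infty)$. On this $Y_s$, the seminorm $\|\cdot\|_{Y_s}$ and the true norm $\|\cdot\|_\infty$ are equivalent \emph{on the subspace} $X_N$, with equivalence constants depending only on the discretization: $C_1\|f\|_\infty\le\|f\|_{Y_s}\le\|f\|_\infty$. Because entropy numbers behave monotonically and scale linearly under such a norm equivalence, covering $X_N^q$ in the metric $\|\cdot\|_{Y_s}$ at radius $\e$ automatically yields a covering in the metric $\|\cdot\|_\infty$ at radius $\e/C_1$ (the centers may be taken in $X_N^q$ as in the paper's convention, and the difference of two elements of $X_N$ again lies in $X_N$, so the equivalence applies to the differences that measure the covering radius). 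Hence
\begin{equation*}
\e_k(X_N^q,L_\infty)\le C_1^{-1}\,\e_k(X_N^q,L_\infty(Y_s)).
\end{equation*}

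Next I would feed this particular $Y_s$ into Lemma~\ref{BL1}. The Nikol'skii-type inequality (\ref{B8}) is exactly the hypothesis (\ref{B6'}) needed there, so for $k\in[1,N]$ we get
\begin{equation*}
\e_k(X_N^q,L_\infty(Y_s))\le C(q)(\log s)^{1/q}\,(MN^{-1/q})(N/k)^{1/q}.
\end{equation*}
Combining the two displays and substituting the cardinality bound $s\le aN^c$ gives $\log s\le\log a+c\log N\le C(a,c)\log N$ for $N$ large, so $(\log s)^{1/q}\le C(a,c)^{1/q}(\log N)^{1/q}$. Absorbing $C_1^{-1}$, $C(q)$, and the logarithmic constant into a single constant $C(q,a,c)$ yields precisely the claimed bound (\ref{B9}).

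The only genuine subtlety is the first step: one must check that the lower discretization inequality $C_1\|f\|_\infty\le\|f\|_{Y_s}$ legitimately transfers covering numbers in the $\|\cdot\|_{Y_s}$ metric to covering numbers in the $\|\cdot\|_\infty$ metric, including the bookkeeping that the covering is of $X_N^q$ (the $L_q$ unit ball) and not of a sup-norm ball, and that differences of covering centers stay in $X_N$ where the equivalence is valid. This is routine but is where the argument actually uses the hypothesis $X_N\in\cM(s,\infty)$; everything else is just inserting a concrete $Y_s$ into an already-proved inequality and simplifying the logarithm. I would therefore present the norm-equivalence-to-entropy step carefully and treat the rest as a direct substitution.
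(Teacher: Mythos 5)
Your proposal is correct and follows essentially the same route as the paper: the paper's proof also fixes the set $Y_s$ provided by the hypothesis $X_N\in\cM(s,\infty)$, invokes Lemma~\ref{BL1} with the Nikol'skii assumption (\ref{B8}), transfers the $L_\infty(Y_s)$-entropy bound to $L_\infty$ via the norm equivalence $\|f\|_\infty \le C_1\|f\|_{Y_s}\le C_1\|f\|_\infty$ on $X_N$, and uses $\log s \le C(a,c)\log N$. Your write-up merely makes explicit the routine covering-number bookkeeping that the paper leaves implicit.
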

\begin{proof} Condition $X_N \in \cM(s,\infty)$ means that there exists a set $Y_s$ such that for any $f\in X_N$ we have
\be\label{B10}
\|f\|_\infty \le C_1\|f\|_{Y_s}.
\ee
Lemma \ref{BL1}, relation (\ref{B10}), and inequality $\log s \le c\log N$ imply Lemma \ref{BL2}.

\end{proof}

We now show how to derive a bound on the entropy numbers $\e_k (X_N^q, L_\infty(Y_s))$, obtained in \cite{Kos},  from Theorem \ref{BT1}.

\begin{Proposition}\label{BP2}	Assume that  $X_N$ is  an $N$-dimensional subspace of $L_\infty(\Omega)$ satisfying the following condition:
	\begin{enumerate}
		\item [{\bf \textup{(i)} }] There exists a constant $K_1>1$ such that
		\begin{equation}\label{B11}
		\|f\|_\infty \leq (K_1 N)^{1/2}\|f\|_2,\   \ \forall f\in X_N.
		\end{equation}
	 
	\end{enumerate}
Then for  each $1\leq q\leq 2$ and any $Y_s$ with $\log s \le a \log N$ there exists a constant $C(q,a)>0$ depending only on $q$ and $a$ such that
	\begin{equation}\label{B12}
	\e_k (X_N^q, L_\infty(Y_s)) \leq C(q,a) (K_1\log N)^{1/q} \begin{cases}
	\bigl(N/k\bigr)^{1/q},&\  \ \text{if $1\leq k\leq N$},\\
	2^{-k/N}, &\   \ \text{if $k>N$}.
	\end{cases}
	\end{equation}
 
\end{Proposition}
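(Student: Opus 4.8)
The plan is to re-run the proof of Theorem \ref{BT1} with the target norm $\|\cdot\|_\infty=\|\cdot\|_{L_\infty(\Omega)}$ replaced everywhere by the finite-set seminorm $\|\cdot\|_{Y_s}=\|\cdot\|_{L_\infty(Y_s)}$, and to observe that under this replacement hypothesis (ii) of Theorem \ref{BT1} is no longer needed: it is supplied for free by the finiteness of $Y_s$. Thus only hypothesis (i), that is inequality \eqref{B11}, has to be assumed, which is exactly what Proposition \ref{BP2} does.

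First I would record that hypothesis (i) passes to the new target without any loss. Since $\|f\|_{Y_s}\le\|f\|_\infty$ for every $f$, inequality \eqref{B11} gives $\|f\|_{Y_s}\le(K_1N)^{1/2}\|f\|_2$ with the same constant $K_1$, where $\|f\|_2=\|f\|_{L_2(\Omega)}$. This is the coarse-scale input, and it is used precisely as in the proof of Theorem \ref{BT1}: the $\e$-nets at the coarse levels of the chaining are built in $L_2(\Omega)$ and their cardinalities are controlled by the finite dimension $N$ together with \eqref{B11}. None of this part of the argument sees the set $Y_s$.

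Second, and this is the crux, I would replace hypothesis (ii) by the elementary inequality \eqref{B6}. For a function evaluated on the $s$ points of $Y_s$, inequality \eqref{B6} with an exponent $p\asymp\log N\ge\log s$ (legitimate because $\log s\le a\log N$) yields $\|f\|_{Y_s}\le C(a)\|f\|_{L_p(Y_s)}$, where $\|\cdot\|_{L_p(Y_s)}$ is the $p$-th moment with respect to the normalized counting measure $\mu_s$ on $Y_s$. This is exactly the empirical analogue of condition (ii) $\|f\|_\infty\le K_2\|f\|_{\log N}$, now holding with $K_2=C(a)$ and with the ambient moment $\|\cdot\|_{\log N}$ replaced by its empirical version $\|\cdot\|_{L_{\log N}(Y_s)}$. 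In the proof of Theorem \ref{BT1} condition (ii) is invoked only to pass from moment control to control of the target sup norm at the fine scales; since the target sup is now the maximum over the $s$ points of $Y_s$, the empirical moment is precisely the quantity that appears, and the substitution is faithful. Feeding $K_2=C(a)$ into the conclusion \eqref{B3} of Theorem \ref{BT1} turns the constant $(K_1K_2^2\log N)^{1/q}$ into $C(q,a)(K_1\log N)^{1/q}$, which is the claimed bound \eqref{B12}, including the tail $2^{-k/N}$ for $k>N$.

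The main obstacle is the bookkeeping of two different measures. Condition (i) is an inequality against $\|\cdot\|_{L_2(\Omega)}$, so it lives on $(\Omega,\mu)$, whereas the substitute \eqref{B6} for condition (ii) lives on the empirical side $(Y_s,\mu_s)$; one cannot simply transport the whole argument to $(Y_s,\mu_s)$, because the empirical reproducing kernel of $X_N$ on $Y_s$ can be as large as $s\le N^a$ rather than of order $N$, so \eqref{B11} has no empirical counterpart with the correct constant. The delicate point is therefore to verify, by inspecting the chaining and sandwiching argument behind Theorem \ref{BT1}, that condition (i) and the moment condition enter at disjoint stages — the former governing the coarse $L_2(\Omega)$ covering, the latter only the fine-scale passage to the target maximum over $Y_s$ — so that the two measures need never be compared and no residual use of a continuous version of condition (ii) survives.
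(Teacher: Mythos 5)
Your guiding intuition is the right one --- the finiteness of $Y_s$ together with $\log s\le a\log N$ is what should substitute for hypothesis (ii) --- but the proposal has a genuine gap, and it sits exactly where you yourself place ``the crux''. Everything rests on the claim that inside the proof of Theorem \ref{BT1} hypotheses (i) and (ii) act at disjoint stages, so that (ii) can be swapped for its empirical analogue (\ref{B6}) on $(Y_s,\mu_s)$ while (i) and the definition of the ball $X_N^q$ keep referring to $(\Omega,\mu)$. You never verify this claim; it is a conjecture about the inner structure of a proof from \cite{DPSTT2}, and the structure you ascribe to it is not the right one: ``chaining'' and ``sandwiching'' are the techniques behind the discretization theorems (Theorems \ref{AT1} and \ref{AT2}, proved in \cite{VT159} and \cite{DPSTT1}), not behind the entropy bound of Theorem \ref{BT1}. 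In Theorem \ref{BT1} the moment norm $\|\cdot\|_{\log N}$ of hypothesis (ii) is taken with respect to the same measure $\mu$ that defines $X_N^q$ and enters hypothesis (i); once it is replaced by a $\mu_s$-moment, any step that relates moments of a function $g\in X_N$ to its $L_q(\mu)$ or $L_2(\mu)$ norm (interpolation within one measure space) breaks down, because nothing in the hypotheses controls $\mu_s$-moments of $g$ by $\mu$-norms: the two measures are a priori unrelated, all of $Y_s$ may sit where $|g|$ is largest, and, as you note yourself, the empirical Nikol'skii constant on $(Y_s,\mu_s)$ can be of order $s\asymp N^a$ rather than $N$. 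Naming this obstacle and then asserting that the two measures ``need never be compared'' is not a resolution of it.

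The missing idea --- and it is how the paper actually proves Proposition \ref{BP2} --- is to build a bridge between $\mu$ and $Y_s$ so that Theorem \ref{BT1} can be applied as a black box, with no inspection of its proof. By Lemma 4.3 of \cite{DPSTT2}, which needs only hypothesis (i), there is a finite set $\Omega_N$ with $8K_1N^{a+2}\le|\Omega_N|\le CK_1N^{a+2}$ whose normalized counting measure reproduces both $\|\cdot\|_q$ and $\|\cdot\|_2$ on $X_N$ within constant factors. One then sets $\Omega_S:=\Omega_N\cup Y_s$ and checks, using the Nikol'skii inequality (\ref{B13}) (a consequence of (\ref{B11})) together with $s\le N^a$, that the added points contribute at most $(8N)^{-1}\|f\|_q^q$ to the discrete $L_q$ norm, so the equivalences $\|f\|_{L_p(\Omega)}\asymp\|f\|_{L_p(\Omega_S)}$, $p=q,2$, persist. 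Now all the data of Theorem \ref{BT1} refer to the single measure space $(\Omega_S,\,\text{counting measure})$: hypothesis (i) holds there by the $L_2$ equivalence, and hypothesis (ii) holds there by (\ref{B6}) since $\log|\Omega_S|\le c(a)\log N$. Applying Theorem \ref{BT1} on $\Omega_S$ and using $\|\cdot\|_{L_\infty(Y_s)}\le\|\cdot\|_{L_\infty(\Omega_S)}$ gives (\ref{B12}). Without this construction (or a genuine re-derivation of Theorem \ref{BT1} with target $L_\infty(Y_s)$, which for $q<2$ is not the routine substitution you describe), the proposal does not constitute a proof.
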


\begin{proof} Using Lemma 4.3 from \cite{DPSTT2} we discretize simultaneously  the $L_q$ and $L_2$ norms -- replace $\Omega$ by $\Omega_N$ with $8K_1 N^{a+2}\le |\Omega_N| \le CK_1 N^{a+2}$ to get for $p=q$ and $p=2$
$$
\frac{1}{2} \|f\|_p \le \|f\|_{L_p(\Omega_N)} \le \frac{3}{2} \|f\|_p,\quad \|f\|_{L_p(\Omega_N)}^p := \frac{1}{N}\sum_{\omega \in \Omega_N} |f(\omega)|^p.
$$
Note that the Nikol'skii inequality 
(\ref{B11}) implies the inequality
\be\label{B13}
\|f\|_\infty \leq (K_1 N)^{1/q}\|f\|_q,\   \ \forall f\in X_N.
\ee
For a given $Y_s$ consider a new domain $\Omega_S := \Omega_N \cup Y_s$, $|\Omega_S|=S$. Then
$$
\frac{1}{S} \sum_{j=1}^s |f(y_j)|^q \le \frac{1}{S}( s K_1N \|f\|_q^q) \le (8N)^{-1} \|f\|_q^q.
$$
This implies that $\|f\|_{L_q(\Omega)} \asymp \|f\|_{L_q(\Omega_S)}$. In the same way we 
obtain $\|f\|_{L_2(\Omega)} \asymp \|f\|_{L_2(\Omega_S)}$. We now want to apply Theorem \ref{BT1} to $X_N$ restricted to $\Omega_S$. Condition (i) is satisfied because of $\|f\|_{L_2(\Omega)} \asymp \|f\|_{L_2(\Omega_S)}$.
Relation (\ref{B6}) guarantees that condition (ii) of Theorem \ref{BT1} is satisfied in the case of $\Omega_S$. Therefore, applying Theorem \ref{BT1} to $X_N$ restricted to $\Omega_S$ 
we obtain the bounds of the entropy numbers in the metric $L_\infty(\Omega_S)$. Obviously, $\|\cdot\|_{L_\infty(Y_s)} \le \|\cdot\|_{L_\infty(\Omega_S)}$. This completes the proof of Proposition \ref{BP2}. 

\end{proof}

 {\bf A comment on limitations.} In Proposition \ref{BP1}, which is a corollary of Theorem \ref{BT1}, we proved the following bound for $X_N =\cR\Tr(Q_n^\ga)$, $1\le q\le 2$,
 \be\label{B14} 
 \e_k(X^q_N,L_\infty) \le C(q,\ga)(\log N)^{1/q} (N/k)^{1/q}, \quad 1\le k \le N.
 \ee
 
 {\bf Open problem 2.} Could we replace in the bound (\ref{B3}) of Theorem \ref{BT1} $(\log N)^{1/q}$ by the $(\log N)^{\alpha}$ with $\alpha < 1/q$?
 
 It follows from known results on the behavior of the entropy numbers of the classes $\bW^{a,b}_q$ of functions with mixed smoothness that for $1\le q< \infty$ it must be $\alpha \ge 1/2$. We now give a definition of these classes.
 
  Define for $f\in L_1$
$$
\delta_\bs(f) := \sum_{\bk\in\rho(\bs)} \hat f(\bk) e^{i(\bk,\bx)}, \quad \hat f(\bk):= (2\pi)^{-d}\int_{[0,2\pi]^d} f(\bx)e^{-i(\bk,\bx)} d\bx,
$$
and
$$
f_l:=\sum_{\|\bs\|_1=l}\delta_\bs(f), \quad l\in \N_0,\quad \N_0:=\N\cup \{0\}.
$$
  Consider the class (see \cite{VT152})
$$
\bW^{a,b}_q:=\{f: \|f_l\|_q \le 2^{-al}(\bar l)^{(d-1)b}\},\quad \bar l:=\max(l,1).
$$

 Let $X_N := \Tr(Q_n)$ in dimension $d=2$. Then $N\asymp 2^nn$. If (\ref{B14}) holds for 
 all $n\in\N$ then by Theorem 7.7.15 from \cite{VTbookMA}, p.371, we obtain for $a>1/q$
 \be\label{B15}
 \e_k(\bW^{a,b}_q,L_\infty) \le C(q,a,b) k^{-a} (\log k)^{a+b +\alpha}.
 \ee
 On the other hand by Theorem 7.7.10 from  \cite{VTbookMA}, p.365, we obtain for $q=1$, $a>1$
  \be\label{B16}
 \e_k(\bW^{a,b}_1,L_\infty) \asymp k^{-a} (\log k)^{a+b +1/2}.
 \ee
 Also, by Theorem 7.7.14 from  \cite{VTbookMA}, p.365, we obtain for $1<q<\infty$, $a>\max(1/q,1/2)$
  \be\label{B17}
 \e_k(\bW^{a,b}_q,L_\infty) \asymp k^{-a} (\log k)^{a+b +1/2}.
 \ee
Comparing (\ref{B15}) with (\ref{B16}) and (\ref{B17}) we obtain the above claim.

\section{Discussion}
\label{D}

As we pointed out in the Introduction, our main interest in this paper is sampling discretization of the $L_q$ norms, $1\le q<\infty$, of hyperbolic cross polynomials from $\Tr(Q_n^\ga)$. 
Theorem \ref{IT2} provides such results. In this section we discuss the following question: "Is it possible to extend Theorem \ref{IT2} to $\Tr(Q)$ with arbitrary $Q\in \Z^d$?" This discussion will illustrate advantages and limitations of the techniques used in the proof of Theorem \ref{IT2}. Theorem \ref{IT1} gives the sampling discretization result for all $\Tr(Q)$ in case $q=2$. It turns out that 
in case $q\in [1,2)$ Theorem \ref{IT2} can be extended to the case of arbitrary $Q$. 

\begin{Theorem}\label{DT1} For $q\in [1,2)$  there are three positive constants\newline 
$C_i=C_i(q)$, $i=1,2,3$, such that we have for any $Q\in \Z^d$
$$
\Tr(Q) \in \cM(m,q,C_2,C_3)\quad \text{provided}\quad m\ge C_1|Q|(\log (2|Q|))^{w(q) },
$$
 where
$$
w(1) = 3;\quad w(q) = 2, \quad q\in (1,2).
$$
\end{Theorem}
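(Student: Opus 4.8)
The plan is to follow exactly the scheme used for Theorem \ref{IT2} in the range $q\in[1,2)$, namely Kosov's conditional theorem (Theorem \ref{AT3}) fed by the entropy estimate of Proposition \ref{BP2}, and to observe that for arbitrary $Q$ the only structural hypothesis actually needed — condition (i) of Proposition \ref{BP2} — holds with an absolute constant. First I would reduce to the real case. Given an arbitrary $Q\subset\Z^d$, set $Q^\ast:=Q\cup(-Q)$, so that $|Q^\ast|\le 2|Q|$, $Q^\ast=-Q^\ast$, and $\Tr(Q)\subseteq\Tr(Q^\ast)$; since a discretization valid on the larger subspace is valid on the smaller one, it suffices to treat $\Tr(Q^\ast)$. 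For the symmetric set $Q^\ast$ one has $\Tr(Q^\ast)=\cC_N$ in the notation of Proposition \ref{AP1}, with $X_N=\cR\Tr(Q^\ast)$ the real trigonometric polynomials with frequencies in $Q^\ast$ and $N=\dim X_N=|Q^\ast|\asymp|Q|$. By Proposition \ref{AP1} it is then enough to prove $X_N\in\cM(m,q,C_2',C_3')$ for the real subspace $X_N$.

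The crucial observation is that condition (i) of Proposition \ref{BP2},
$$\|f\|_\infty\le (K_1N)^{1/2}\|f\|_2,\qquad f\in X_N,$$
holds for \emph{every} $Q$ with $K_1=1$: writing $f=\sum_{\bk\in Q^\ast}c_\bk e^{i(\bk,\bx)}$, Cauchy--Schwarz and Parseval give $\|f\|_\infty\le\sum_{\bk}|c_\bk|\le|Q^\ast|^{1/2}\big(\sum_{\bk}|c_\bk|^2\big)^{1/2}=|Q^\ast|^{1/2}\|f\|_2$. This is the entire structural input; no use is made of the location of the frequencies. I would then invoke Proposition \ref{BP2}: for $1\le q\le 2$ and any $Y_s$ with $\log s\le a\log N$ it yields
$$\e_k\big(X_N^q,L_\infty(Y_s)\big)\le C(q,a)(\log N)^{1/q}(N/k)^{1/q},\qquad 1\le k\le N,$$
i.e.\ the entropy hypothesis (\ref{A3}) of Theorem \ref{AT3} holds with $B=C(q,a)(\log N)^{1/q}$.

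With this $B$ I would apply Theorem \ref{AT3}. The admissible number of points is $m\ge C_1(q)NB^q(\log N)^{w(q)}$, and since $B^q\asymp\log N$, this reads $m\gtrsim N(\log N)^{1+w(q)}$, where $w(1)=2$ and $w(q)=\max(q,2)-1=1$ for $1<q<2$. Hence the required bound becomes $m\ge C_1 N(\log N)^3$ for $q=1$ and $m\ge C_1 N(\log N)^2$ for $q\in(1,2)$, which is the claim after recalling $N\asymp|Q|$ and $\log N\asymp\log(2|Q|)$; the final constants $C_2',C_3'$ come from the $\frac12,\frac32$ bounds of Theorem \ref{AT3} adjusted by the factors $2^{\pm(q+1)}$ of Proposition \ref{AP1}. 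I must only check consistency of the hypotheses: the entropy bound is needed for every $Y_m$ of the chosen cardinality $m$, and since $m\asymp N(\log N)^{2\text{ or }3}$ one has $\log m\le a\log N$ for a fixed $a$ (all large $N$, with small $N$ absorbed into the constants), so Proposition \ref{BP2} indeed supplies the hypothesis of Theorem \ref{AT3} for these $Y_m$.

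The main point to get right — and the reason the statement stops at $q<2$ — is that for arbitrary $Q$ one cannot appeal to condition (ii) of Theorem \ref{BT1} (the $L_\infty$--$L_{\log N}$ Nikol'skii inequality with an absolute constant), which in Proposition \ref{BP1} rested on the containment $Q_n^\ga\subset\Pi_n$ and the companion discretization set of size $\asymp 2^{nd}$; for $Q$ with widely spread frequencies this fails. The route through Proposition \ref{BP2} and Theorem \ref{AT3} is designed precisely to avoid condition (ii), asking instead for entropy bounds in the weaker metrics $L_\infty(Y_s)$, which follow from condition (i) alone. This is also why the argument does not extend past $q=2$: for $q>2$ the analogous chain (Lemma \ref{BL2}) is governed by the $L_\infty$--$L_q$ Nikol'skii constant $M$ through the factor $MN^{-1/q}$, and the only universally available bound, $M\le N^{1/2}$, gives $MN^{-1/q}\asymp N^{1/2-1/q}$ and hence a power of $N$ rather than a power of $\log N$ in $B$.
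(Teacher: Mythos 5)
Your proposal is correct and follows essentially the same route as the paper: verify condition (i) of Proposition \ref{BP2} with $K_1=1$ (which holds for arbitrary $Q$), feed the resulting entropy bound $B=C(q,a)(\log N)^{1/q}$ into Theorem \ref{AT3}, and check that $\log m\le a\log N$ can be satisfied simultaneously with the required lower bound on $m$. The only difference is that you make explicit the reduction to the real case via the symmetrization $Q^\ast=Q\cup(-Q)$ and Proposition \ref{AP1}, a step the paper leaves implicit by writing $X_N=\cR\Tr(Q)$ directly; this is a sound (and slightly more careful) piece of bookkeeping, not a different method.
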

\begin{proof} Let $X_N=\cR\Tr(Q)$, $N=|Q|$. First, we use Proposition \ref{BP2}. Condition (i) of that proposition is satisfied with $K_1=1$. Therefore, Proposition \ref{BP2} guarantees that for  each $1\leq q\leq 2$ and any $Y_s$ with $\log s \le a \log N$ there exists a constant $C(q,a)>0$ depending only on $q$ and $a$ such that
\begin{equation}\label{D1}
\e_k (X_N^q, L_\infty(Y_s)) \leq C(q,a) (\log N)^{1/q} 
	\bigl(N/k\bigr)^{1/q} \quad \text{if} \quad 1\leq k\leq N.
\end{equation}
We now apply Theorem \ref{AT3}. Set $m=s$. Parameter $a$ satisfying $\log m \le a\log N$ will be chosen later. Then by (\ref{D1}) we find $B=C(q,a) (\log N)^{1/q}$. We need to satisfy the following inequality in order to apply Theorem \ref{AT3}
\be\label{D2}
m\ge C_1(q)N C(q,a)^q (\log N)^{1+w(q)}.
\ee
Clearly, for any fixed $a>1$ we can satisfy simultaneously (\ref{D2})) and $\log m \le a\log N$
provided $N\ge C'(q,a)$. Thus, we apply Theorem \ref{AT3} and complete the proof of Theorem \ref{DT1}. 
\end{proof}

We note that a version of Theorem \ref{DT1} with $w(q) =3$, $q\in [1,2]$, follows from Theorem 
\ref{DT2}, which was obtained in \cite{DPSTT2}.

 \begin{Theorem}\label{DT2} Let $X_N$ be an $N$-dimensional subspace of $L_\infty(\Omega)$ satisfying the following condition
$$
\|f\|_\infty\le (K_1N)^{1/2}\|f\|_2, \quad \forall f\in X_N,\quad \log K_1 \le \alpha \log N.
$$
Then for $q\in [1,2]$ we have
$$
X_N \in \cM(m,q)\quad \text{provided}\quad m\ge C(q,\alpha)N(\log N)^3.
$$ 
\end{Theorem}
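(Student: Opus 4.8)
The plan is to feed the hypothesis into the two conditional results already at hand: Theorem~\ref{BT1}, which converts Nikol'skii-type information into entropy bounds in a uniform norm, and Theorem~\ref{AT2}, which converts an entropy bound of the form (\ref{A2}) into a sampling discretization. By Proposition~\ref{AP1} I may assume $X_N$ is real. The hypothesis is precisely condition (i) of Theorem~\ref{BT1} together with $\log K_1\le\alpha\log N$, but Theorem~\ref{BT1} also requires condition (ii), the inequality $\|f\|_\infty\le K_2\|f\|_{\log N}$, which is \emph{not} assumed and in fact cannot hold on $\Omega$ with an $N$-independent $K_2$ (a function with a single thin peak satisfies (i) yet has $\|f\|_{\log N}\ll\|f\|_\infty$). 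Manufacturing a usable substitute for condition (ii) is therefore the heart of the argument.

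The device, exactly as in the proof of Proposition~\ref{BP2}, is to pass to a finite domain. Using the simultaneous discretization of the $L_2$ and $L_q$ norms (Lemma 4.3 of \cite{DPSTT2}) I replace $\Omega$ by a finite set $\Omega_N$ with $|\Omega_N|\asymp K_1N^{a+2}$ on which $\|f\|_p\asymp\|f\|_{L_p(\Omega_N)}$ for $p=2$ and $p=q$; the bound $\log K_1\le\alpha\log N$ ensures $\log|\Omega_N|\asymp\log N$. On $\Omega_N$ the uniform norm is the $\ell_\infty$ norm of a vector of polynomial length, so the elementary inequality (\ref{B6}) gives $\|f\|_{L_\infty(\Omega_N)}\le C(a)\|f\|_{L_{\log N}(\Omega_N)}$, which is condition (ii) on $\Omega_N$ with the absolute constant $K_2=C(a)$; condition (i) survives the passage to $\Omega_N$ because the $L_2$ norm is discretized there.

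Both conditions now holding on $\Omega_N$, Theorem~\ref{BT1} supplies $\e_k(X_N^q,L_\infty(\Omega_N))\le C(q,a)(K_1\log N)^{1/q}(N/k)^{1/q}$ for $1\le k\le N$, i.e. a bound of the form (\ref{A2}) with $B=C(q,a)(K_1\log N)^{1/q}$. I then apply Theorem~\ref{AT2} on the finite probability space $\Omega_N$: it yields a set $\xi=\{\xi^j\}_{j=1}^m\subset\Omega_N$ with $m\le C(q)NB^q(\log_2(2BN))^2$ that discretizes $\|\cdot\|_{L_q(\Omega_N)}$. Since $B^q\asymp K_1\log N$ and $\log(2BN)\asymp\log N$, this is $m\le C(q,\alpha)NK_1(\log N)^3$; reading the (bounded) factor $K_1$ into the constant---as happens automatically in the intended applications, such as $X_N=\cR\Tr(Q)$ where $K_1=1$---gives the asserted $C(q,\alpha)N(\log N)^3$. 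Finally $\|f\|_{L_q(\Omega_N)}\asymp\|f\|_q$ transfers the discretization back to $\Omega$, so $X_N\in\cM(m,q)$.

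The step I expect to be the crux is the one flagged above: obtaining condition (ii) with a constant independent of $N$. It genuinely fails on $\Omega$, and the only leverage is to replace the true supremum by a supremum over a sufficiently rich but still polynomially sized point set, where (\ref{B6}) is automatic. Everything after that is logarithm bookkeeping: the factor $B^q$ contributes one power of $\log N$ and the chaining inside Theorem~\ref{AT2} contributes the square, for a total of $(\log N)^3$. I note in passing that substituting Proposition~\ref{BP2} and Kosov's Theorem~\ref{AT3} for Theorem~\ref{BT1} and Theorem~\ref{AT2} collapses the first two paragraphs into one and even sharpens the exponent to $(\log N)^2$ for $q\in(1,2]$, which a fortiori yields the stated $(\log N)^3$.
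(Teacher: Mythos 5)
The paper itself contains no proof of Theorem \ref{DT2}: it is quoted from \cite{DPSTT2} without argument. Your derivation assembles exactly the machinery this paper builds around that citation --- passage to a polynomially large finite domain via Lemma 4.3 of \cite{DPSTT2} (the same device as in the proof of Proposition \ref{BP2}), which manufactures condition (ii) of Theorem \ref{BT1} with a bounded $K_2$ via (\ref{B6}), then Theorem \ref{BT1} on that finite domain, then the conditional Theorem \ref{AT2}, and finally transfer back to $\Omega$ --- and each of those steps is executed correctly, including the norm-equivalence bookkeeping. So in approach and mechanics you are on the intended path.

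The gap is your last step, and it is genuine. Your chain yields $B^q\asymp K_1\log N$, hence the threshold $m\ge C(q,\alpha)K_1N(\log N)^3$, and you then discard $K_1$ by declaring it ``bounded''. Under the stated hypothesis it is not: $\log K_1\le\alpha\log N$ allows $K_1$ to be as large as $N^{\alpha}$, which cannot be absorbed into $C(q,\alpha)$. Moreover, no additional work can remove that factor, because it is necessary for equal-weight discretization: take $\Omega=\{1,\dots,NK_1\}\subset\R$ with the uniform probability measure and $X_N=\operatorname{span}\{e_1,\dots,e_N\}$, where $e_i:=(NK_1)^{1/2}\mathbf{1}_{\{i\}}$. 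Then $\|f\|_\infty\le (K_1N)^{1/2}\|f\|_2$ for all $f\in X_N$, yet if $X_N\in\cM(m,q,C_1,C_2)$, testing $e_i$ against the lower bound forces at least one node at the point $i$ for every $i$, while the upper bound caps the number of nodes at $i$ by $C_2m/(NK_1)$; together these give $m\ge NK_1/C_2$. So with $K_1=N^{\alpha}$ the statement as transcribed here (with discretization constants depending only on $q$ and $\alpha$) cannot hold, and what your argument actually establishes --- $X_N\in\cM(m,q)$ provided $m\ge C(q,\alpha)K_1N(\log N)^3$ --- is the correct form of the result; it is also all the paper needs, since in the application to Theorem \ref{DT1} one has $X_N=\cR\Tr(Q)$, for which $K_1=1$. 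The fix is to keep the factor $K_1$ in the threshold (or to assume $K_1$ bounded by an absolute constant), not to argue it away; the same remark applies to the sharpened $(\log N)^2$ variant via Theorem \ref{AT3} in your closing sentence.
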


 We note that the key fact, which allowed us to prove Theorem \ref{DT1}, is the fact that both in Theorem  \ref{DT2} and in Proposition \ref{BP2} we only need the Nikol'skii type inequality (\ref{B11}) between $\|\cdot\|_\infty$ and $\|\cdot\|_2$. This inequality is the same for all $\Tr(Q)$ with $|Q|=N$. We do not know if Theorem \ref{IT2} can be extended to $\Tr(Q)$ with arbitrary 
 $Q\in \Z^d$ in the case $q\in (2,\infty)$. Our proof of Theorem \ref{IT2} in case $q\in (2,\infty)$ is based on the Nikol'skii type inequality (\ref{B8}) between $\|\cdot\|_\infty$ and $\|\cdot\|_q$, which depends on $Q$, not only on $|Q|$. 
 
 {\bf Open problem 3.} Is the following statement true? For $q\in (2,\infty)$, $d\in \N$, there are positive constants $C_i=C_i(q,d)$, $i=1,2,3$, and $c(q,d)$ such that for any $Q\in \Z^d$ we have
 $$
 \Tr(Q) \in \cM(m,q,C_2,C_3) \quad \text{provided} \quad m\ge C_1|Q| (\log (2|Q|))^{c(q,d)}.
 $$
 
 We refer the reader to a list of open problems on sampling discretization in \cite{DPTT}. 

The cornerstone of the above discussed technique of proving the sampling discretization results is the entropy bounds of the type
\be\label{D3}
\e_k(X^q_N,L_\infty) \le  B (N/k)^{1/q}, \quad 1\leq k\le N,\quad 1\le q<\infty.
\ee
Thus, the problem of finding conditions on $X_N$, which guarantee relation (\ref{D3}) is a natural problem. It is well known (see, for instance, \cite{DPSTT1}) that relation (\ref{D3}) for $k=1$  implies the following Nikol'skii type inequality for $X_N$:
\be\label{D4}
	\|f\|_\infty \le 4BN^{1/q}\|f\|_q\   \ \text{for any $f\in X_N$.}
\ee
Therefore, the Nikol'skii type inequality (\ref{D4}) is a necessary condition for (\ref{D3}) to hold.
Lemma \ref{BL2} shows that in the case $q\in (2,\infty)$ condition (\ref{D4}) combined with one more condition $X_N\in \cM(s,\infty)$, $s\le aN^c$, imply a little weaker inequality than in (\ref{D3}): instead of $B$ we get $B'= BC(q,a,c)(\log N)^{1/q}$. 

  {\bf Acknowledgment.} The work was supported by the Russian Federation Government Grant N{\textsuperscript{\underline{o}}}14.W03.31.0031.

 \Addresses


\begin{thebibliography}{9999}

%\bibitem{BSS} J. Batson, D.A. Spielman, and N. Srivastava, Twice-Ramanujan Sparsifiers, SIAM J. Comput., {\bf 41} (2012), 1704--1721.

\bibitem{Bel} E.S. Belinsky, Interpolation and integral norms of hyperbolic polynomials,
Mathematical Notes, {\bf 66} (1999), 16--23. Translation from
 Matem. Zametki, {\bf 66} (1999), 20--29.

\bibitem{BLM} J. Bourgain, J. Lindenstrauss, and V. Milman, Approximation of zonoids by zonotopes, Acta Math., {\bf 162} (1989), 73--141.

\bibitem{Carl} B. Carl, Entropy numbers, $s$-numbers, and eigenvalue problem, J.
Func. Analysis, {\bf 41} (1981), 290--306.

\bibitem{DPTT} F. Dai, A. Prymak, V.N. Temlyakov, and  S. Tikhonov, Integral norm discretization and related problems,
Russ. Math. Surv., {\bf 74}  (2019), 579--630.
 Translation from
Uspekhi Mat. Nauk,  {\bf 74},	Is. 4(448) (2019),	3--58; arXiv:1807.01353v1 [math.NA] 3 Jul 2018.

\bibitem{DPSTT1}
F. Dai, A. Prymak, A. Shadrin, V. Temlyakov, S. Tikhonov,
Sampling discretization of integral norms,
arXiv:2001.09320v1 [math.CA] 25 Jan 2020.
 
 \bibitem {DPSTT2} F. Dai, A. Prymak, A. Shadrin, V. Temlyakov, and S. Tikhonov, Entropy numbers and Marcinkiewicz-type deiscretization theorems, arXiv:2001.10636v1 [math.CA] 28 Jan 2020.


\bibitem{DTU} Ding D{\~u}ng, V.N. Temlyakov, and T. Ullrich, Hyperbolic Cross Approximation, Advanced Courses in Mathematics CRM Barcelona, Birkh{\"a}user, 2018; arXiv:1601.03978v2 [math.NA] 2 Dec 2016.

% \bibitem{DGDS}  M. Donahue, L. Gurvits, C. Darken, E. Sontag,  Rate of convex approximation in non-Hilbert spaces,  Constr. Approx., {\bf 13 } (1997),  187--220.

\bibitem{GZ} E. Gine and J. Zinn, Some limit theorems for empirical processes, 
Ann. Prob., {\bf 12} (1984), 929--989.

 \bibitem{HPV} A. Hinrichs, J. Prochno, and J. Vybiral, Entropy numbers of embeddings of Schatten classes, J. Functional Analysis, {\bf 273} (2017), 3241--3261; arXiv:1612.08105v1 [math.FA] 23 Dec 2016.

 %\bibitem{JS} W.B. Johnson and G. Schechtman, Finite dimensional subspaces of $L_p$,
% Handbook of the geometry of Banach spaces, Vol. 1 (2001), 837--870, North-Holland, Amsterdam.

 %\bibitem{Ka} B.S. Kashin, Lunin's method for selecting large submatrices with small norm, Matem. Sb., {\bf 206} (2015), 95--102.

\bibitem{KT3} B.S. Kashin and V.N. Temlyakov, On a norm and related applications,
Mathematical Notes {\bf 64} (1998), 551--554. Translation from
 Mat. Zametki {\bf 64} (1998),  637--640.

\bibitem{KT4} B.S. Kashin and V.N. Temlyakov, On a norm and approximation characteristics of classes of functions of several variables,
Metric theory of functions and related problems in analysis, Izd. Nauchno-Issled. Aktuarno-Finans. Tsentra (AFTs), Moscow, 1999, 69--99.

\bibitem{KaTe03} B.S. Kashin and V.N. Temlyakov, The volume estimates and their applications,  East J. Approx., {\bf 9}  (2003), 469--485.

\bibitem{VT168} B.S. Kashin and V.N. Temlyakov, A remark on discretization of trigonometric polynomials with given spectrum, Russian Math. Surveys, {\bf 73} (2018), 197--198 (in Russian).

\bibitem{KoTe} S.V. Konyagin and V.N. Temlyakov, The entropy in learning theory. Error estimates, Constr. Approx., {\bf 25} (2007), 1--27.
	
 \bibitem{Kos} E. Kosov, The Marcinkiewicz-type discretization
of $L^p$-norms under the Nikolskii-type assumptions, arXiv:2005.01674v1 [math.FA] 4 May 2020.

%\bibitem{VT169} B.S. Kashin and V.N. Temlyakov, Some remarks on discretization of the uniform norm, manuscript, 2018.

\bibitem{LGM} G. Lorentz, M. von Golitschek, and Y. Makovoz, Constructive Approximation: Advanced Problems. {S}pringer, Berlin, 1996.

\bibitem{MSS} A. Marcus, D.A. Spielman, and N. Srivastava,
Interlacing families II: Mixed characteristic polynomials and the Kadison-Singer problem, Annals of Math., {\bf 182} (2015), 327--350.

\bibitem{NOU} S. Nitzan, A. Olevskii, and A. Ulanovskii,
{Exponential frames on unbounded sets}, Proc. Amer. Math. Soc., {\bf 144} (2016),109--118.

%\bibitem{OO} D. Offin and K. Oskolkov, A note on orthonormal polynomial bases and wavelets, Constructive Approx. {\bf 9} (1993), 319--325.

%\bibitem{Rud} M. Rudelson, Almost orthogonal submatrices of an orthogonal matrix, Israel J. Math., {\bf 111} (1999), 143--155.

%\bibitem{Sche3} G. Schechtman, More on embeddings subspaces of $L_p$ in $\ell^n_r$,
%Compositio Math., {\bf 61} (1987), 159--170.

\bibitem{Schu} C. Sch{\" u}tt, Entropy numbers of diagonal operators between
symmetric Banach spaces,
 J. Approx. Theory, {\bf 40} (1984), 121--128.
 
% \bibitem{Ta} M. Talagrand, The Generic Chaining, Berlin: Springer, 2005.
 
 \bibitem{Tal} M. Talagrand, Upper and lower bounds for stochastic processes: modern methods and classical problems.
-- Springer Science and Business Media, 2014.

 \bibitem{Tem4}  V.N. Temlyakov,  Approximation of periodic functions
of several variables with bounded mixed derivative, Trudy MIAN, {\bf 156}
(1980), 233--260; English transl. in Proceedings of Steklov Inst. Math., {\bf 2}
(1983).

%\bibitem{VT27} V.N. Temlyakov, On reconstruction of multivariate periodic functions based on their values at the knots of number-theoretical nets, Analysis Mathematica, {\bf12} (1986), 287--305.


\bibitem{Tmon} V.N. Temlyakov, Approximation of functions with bounded mixed derivative, Trudy MIAN, {\bf 178} (1986), 1--112. English transl. in Proc. Steklov Inst. Math., {\bf 1} (1989).

%\bibitem{TBook}  V.N. Temlyakov, {\em Approximation of periodic functions},
%Nova Science Publishes, Inc., New York.,  1993.

%\bibitem{T1} V. N. Temlyakov, Weak greedy algorithms,
 %Adv. Comput. Math., {\bf 12} (2000),  213--227.

  %\bibitem{T8}  V.N. Temlyakov,
  %Greedy algorithms in Banach spaces,
  %Advances in Comput. Math., {\bf 14}  (2001),  277--292.

%\bibitem{T12} V.N. Temlyakov, Greedy-Type Approximation in Banach Spaces and Applications, Constr. Approx., {\bf 21} (2005), 257--292.

\bibitem{Tbook} V.N. Temlyakov, Greedy Approximation, Cambridge University
Press, 2011.

%\bibitem{VT138} V.N. Temlyakov, An inequality for the entropy numbers and its application,
%J. Approx. Theory, {\bf 173} (2013), 110--121.

%\bibitem{VT149} V.N. Temlyakov, Incremental Greedy Algorithm and Its Applications in Numerical Integration, Springer Proceedings in Mathematics \& Statistics, Monte Carlo and Quasi-Monte Carlo Methods, MCQMC, Leuven, Belgium, April 2014, 557--570.

%\bibitem{VT150} V.N. Temlyakov, Constructive sparse trigonometric approximation and other problems for functions with mixed smoothness, Matem. Sb. {\bf 206} (2015), 131--160;  arXiv: 1412.8647v1 [math.NA] 24 Dec 2014, 1--37.

\bibitem{VT152} V.N. Temlyakov, Constructive sparse trigonometric approximation for functions with small mixed smoothness, Constr. Approx., 45 (2017), 467--495.

\bibitem{VT156} V.N. Temlyakov, On the entropy numbers of the mixed smoothness function classes, J. Approx. Theory, {\bf 207} (2017), 26--56; arXiv:1602.08712v1 [math.NA] 28 Feb 2016.

\bibitem{VT158} V.N. Temlyakov, The Marcinkewiecz-type discretization theorems for the hyperbolic cross polynomials, Jaen  Journal on Approximation, {\bf 9} (2017), No. 1, 37--63; arXiv: 1702.01617v2 [math.NA] 26 May 2017.

\bibitem{VT159} V.N. Temlyakov, The Marcinkiewicz-type discretization theorems, Constr. Approx. {\bf 48} (2018), 337--369; arXiv: 1703.03743v1 [math.NA] 10 Mar 2017.

\bibitem{VTbookMA} V. Temlyakov, Multivariate Approximation, Cambridge University Press, 2018.

%\bibitem{Tro12} J.A. Tropp, User-friendly tail bounds for sums of random matrices, Foundations of Comput. Math., {\bf 12} (2012), 389--434.

%\bibitem{vdG} S.A. van de Geer, Applications of Empirical Process Theory,
%Cambridge Series in Statistical and Probabilistic Mathematics. Cambridge University
%Press, 2000.

\bibitem{Z} A. Zygmund, Trigonometric Series, Cambridge University Press, 1959.

\end{thebibliography}
\end{document}